\newcommand{\scrA}{\mathcal{A}}
\newcommand{\scrC}{\mathcal{C}}
\newcommand{\scrH}{\mathcal{H}}
\newcommand{\scrP}{\mathcal{P}}
\newcommand{\scrR}{\mathcal{R}}
\newcommand{\scrX}{\mathcal{X}}
\newcommand{\scrY}{\mathcal{Y}}
\newcommand{\scrZ}{\mathcal{Z}}
\newcommand{\im}{\mathrm{Im}}
\newcommand{\eps}{\varepsilon}
\newcommand{\gauss}[2]{\genfrac{[}{]}{0pt}{}{#1}{#2}}
\newcommand{\CC}{\mathbb{C}}
\newcommand{\RR}{\mathbb{R}}
\newcommand{\I}{\mathbf{I}}
\newcommand{\rank}{\mathrm{rank}}
\newcommand{\disc}{\mathrm{disc}}
\newcommand{\codim}{\mathrm{codim}}
\newcommand{\PG}{\mathrm{PG}}
\newcommand{\AG}{\mathrm{AG}}
\newcommand{\GF}{\mathrm{GF}}
\newcommand{\<}{\langle}
\renewcommand{\>}{\rangle} % was: tabbing command
\newtheorem{theorem}{Theorem}[section]
\newtheorem{lemma}[theorem]{Lemma}
\theoremstyle{remark}
\theoremstyle{definition}
\newtheorem{example}[theorem]{Example}
\title{A Survey of Cameron-Liebler Sets and Low Degree Boolean Functions in Grassmann Graphs}
\author{Ferdinand Ihringer}
\begin{document}

\maketitle

\begin{abstract}
We survey results for Cameron-Liebler sets and
low degree Boolean functions for Hamming graphs,
Johnson graphs and Grassmann graphs
from the point of view of association schemes.
This survey covers selected results in finite geometry, Boolean function analysis,
design theory, coding theory, and cryptography.
\end{abstract}

% \setcounter{tocdepth}{2}

% \tableofcontents

\section{Introduction}

We survey \textit{Cameron-Liebler sets} and \textit{low degree Boolean functions}
in hypercube, Johnson graph and Grassmann graph.
Going back to work by Cameron and Liebler in 1982 on permutation groups \cite{CL1982},
the study of Cameron-Liebler sets and \textit{antidesigns}
in finite geometries has been a particularly flourishing topic in recent years.
The seemingly unrelated \textit{Fourier analytic study} of Boolean functions on the hypercube $\{0, 1\}^n$
is a fundamental topic in theoretical computer science which emerged in the 1990s.
Recently, Boolean function analysis expanded from the study of the hypercube
towards the investigation of more algebraic structures such as the subspace lattice of a finite vector space
or bilinear forms over finite fields. Most notably, the remarkable proof of the $2$-to-$2$ Games Conjecture
by Khot, Muli and Safra \cite{KMS2023} has been obtained by the study of
expansion properties of Grassmann graphs.
Thus, both fields converge, but due to different traditions the exchange
of ideas remains low. One reason for this is the lack of mutual intelligibility of publications
in all the involved areas. We hope that this survey helps with amending this situation.\footnote{%
And if not, then at least it summarizes all the aspects of the area which the author enjoys.}

Our aim is a survey of selected results written from a perspective of association schemes
and Delsarte theory. This provides a reasonably uniform framework which
most researchers in finite geometry, spectral graph theory, finite group theory, coding theory
and design theory are familiar with. For readers coming from Boolean function analysis
and closely related areas in theoretical computer science,
the advantage is that most things are phrased in terms of linear algebra and
we can mostly avoid the vast specific terminology of areas such as finite geometry
or Delsarte-style coding theory.
We will also mention some parallel developments in cryptography
to clarify some precedence.

  This survey focusses on three structures:
  the hypercube $\{0, 1\}^n$,
  the Johnson graph of $m$-sets of $\{ 1, \ldots, n \}$
  (or slice of the hypercube), and the Grassmann graph of $m$-spaces in $V(n, q)$
  (where $V(n, q)$ denotes the $n$-dimensional vector space over $\GF(q)$, the field with $q$ elements).
  At the end we will also mention some other recent developments
  in permutation groups as well as other finite geometries such as bilinear forms
  and affine spaces.

\section{Preliminaries}

The main difficulty of the surveyed subject is that it spans several communities,
each of them using their own notation. Thus, we start by surveying several
perspectives on the topic.

% This is not complete. In particular, we will not
% cover the Klein correspondence. The reader is referred to O'Donnell's book for an
% excellent introduction to the Analysis of Boolean functions with a focus on
% the hypercube \cite{BFA}.
% For the cryptographic and cryptological points of view,
% we refer to Section 2.3 of Carlet's book \cite{CarletBook}.
% A good introduction from a design-theoretic perspective for Grassmann graphs is offered
% in the recent article by Kiermaier, Mannaert, and Wassermann \cite{KMW2024}.
% A good introduction from a finite geometric point of view is the work by
% Blokhuis, De Boeck, and D'haeseleer \cite{BDBD2019}.

\subsection{Spectral Graph Theory}

A graph $\Gamma = (X, \sim)$ has vertex set $X$ and an adjacency relation $\sim$.
For us, all graphs are finite and simple.

Let $v=|X|$. Say that $\Gamma$ is $k$-regular if each vertex is adjacent to exactly $k$ vertices.
Denote the {\it adjacency matrix} of $\Gamma$ by $A$.
Write $\lambda_1 \geq \lambda_2 \geq \ldots \geq \lambda_v$ for the spectrum of a symmetric matrix $A$.
Let $u_1, \ldots, u_v$ be a corresponding orthogonal basis of eigenvectors.
Let $V(\lambda)$ denote the eigenspace of $\lambda$.
We denote the idempotent projection onto $V(\lambda)$ by $E(\lambda)$,
so $E(\lambda) = \sum_{\lambda_j = \lambda} u_j u_j^T$.
Clearly, $\rank(E(\lambda)) = \dim(V(\lambda))$ and
\[
 A = \sum_{\lambda \in \{ \lambda_1, \ldots, \lambda_v \}} \lambda E(\lambda).
\]
% A standard unit vector $e_i$ satisfies $e_i = \sum_j u_j u_j^T e_i = e_i \sum_j u_j u_j^T$.
% Thus, $\sum E(\lambda) = \sum u_j u_j^T = I$, where $I$ is the identity matrix.

For a subset $Y \subseteq X$, we write $f_Y$ for the \textit{characteristic function}
(or: \textit{characteristic vector}) of $Y$.
That is, $f_Y(S) = 1$ if $S \in Y$ and $f_Y(S) = 0$ otherwise.
There is some easy, but useful arithmetic. Consider a subspace $W$ of $\CC^v$ with the all-ones vector
$j$ in $W$.
\begin{enumerate}[(i)]
 \item If $f_Y \in W$, then $f_{X \setminus Y} \in W$.
 \item If $Y$ and $Z$ disjoint and $f_Y, f_Z \in W$, then $f_{Y \cup Z} = f_Y + f_Z \in W$.
 \item If $Z \subseteq Y$ and $f_Y, f_Z \in W$, then $f_{Y \setminus Z} = f_Y - f_Z \in W$.
\end{enumerate}
In particular, if one intends to classify sets $Y$ with $f_Y \in W$ (as we want to),
then it suffices to execute this classification up to these three operations.

We will survey the following graphs.
\begin{example}[Hamming Graph]
  Let $X = \{ 0, \ldots, q-1 \}^n$. Say that $x \sim y$ if $x$ and $y$ have Hamming distance $1$.
  Then $(X, \sim)$ is the Hamming graph $H(n, q)$. The case $q=2$ is known as the {\it hypercube}.
  It has $n+1$ distinct eigenvalues $\theta_j = q(n-j)-d$ with $\dim(V(\theta_j)) = \binom{d}{j} (q-1)^j$,
  where $0 \leq j \leq n$, see \cite[Th.~9.2.1]{BCN}.
\end{example}

\begin{example}[Johnson Graph]
  Write $[n]$ for $\{ 1, \ldots, n \}$ and $\binom{[n]}{m}$ for all $m$-sets in $[n]$.
  Let $X = \binom{[n]}{m}$. Say that $x \sim y$ if $|x \setminus y| = 1$.
  Then $(X, \sim)$ is the \textit{Johnson graph} $J(n, m)$ or the \textit{slice (of the hypercube)}.
  For $n \geq 2m$, it has $m+1$ distinct eigenvalues $\theta_j = (m-j)(n-m-j)-j$
  with $\dim(V(\theta_j)) = \binom{n}{j} - \binom{n}{j-1}$,
  where $0 \leq j \leq n$, see \cite[Th.~9.1.2]{BCN}.
\end{example}

\begin{example}[Grassmann Graph]
 For $q$ a prime power, put $V = V(n, q)$.
 Write $\gauss{V}{m}$ for the set of all $m$-spaces in $V$ and define the
 Gaussian coefficient (or: $q$-binomial coefficient) by
 $\gauss{n}{m}_q = \gauss{n}{m} = \left| \gauss{V}{m} \right|$.
 Note that
 \[
  \gauss{n}{m} = \frac{(q^n-1)\cdots (q^{n-m+1}-1)}{(q^m-1) \cdots (q-1)}.
 \]
 Let $X = \binom{V}{m}$. Say that $x \sim y$ if $\dim(x \cap y) = m-1$.
 Then $(X, \sim)$ is the \textit{Grassmann graph} $J_q(n, m)$ or \textit{$q$-Johnson graph}.
  For $n \geq 2m$, it has $m+1$ distinct eigenvalues $\theta_j = q^{j+1} \gauss{m-j}{1}\gauss{n-m-j}{1} - \gauss{j}{1}$
  with $\dim(V(\theta_j)) = \binom{n}{j} - \binom{n}{j-1}$,
  where $0 \leq j \leq n$, see \cite[Th.~9.1.2]{BCN}.
\end{example}

\subsection{Association Schemes}

Let $X$ be a finite set. A symmetric $m$-class association scheme
% \footnote{%
% The reader does not to worry about us re-using the letter $m$ as it will turn out to be the same.}
is a pair $(X, \scrR)$ such that
\begin{enumerate}[(i)]
 \item $\scrR = \{ R_0, R_1, \ldots, R_m\}$ is a partition of $X \times X$;
 \item $R_0 = \{ (x,x): x \in X \}$;
 \item for each $i$, $0 \leq i \leq m$, $R_i = R_i^T$.
 \item there are numbers $p^k_{ij}$ such that for any $(x, y) \in R_k$ the number of $z \in X$
 with $(x, z) \in R_i$ and $(z, y) \in R_j$ is $p^k_{ij}$.
\end{enumerate}
Note that (iii) implies $p^k_{ij} = p^k_{ji}$.
% Symmetric association schemes were introduced by Bose and Shimamoto \cite{BS1952}.
Delsarte's PhD thesis is a good introduction to the topic \cite{Delsarte1973}.

Let $A_i$ denote the adjacency matrix of the graph $\Gamma_i = (X, R_i)$. 
Observe that $\Gamma_i$ is regular of degree $k_i := p^0_{ii}$.
Let $J$ denote the all-ones matrix and $I$ the identity matrix.
The properties (i)--(iv) above translate to
\begin{align*}
 &\sum_{i=0}^m A_i = J, && A_0 = I, && A_i = A_i^T, && A_iA_j = \sum_{k=0}^m p^k_{ij} A_k.
\end{align*}
As $p^k_{ij} = p^k_{ji}$, $A_iA_j = A_jA_i$. Thus, the $A_i$ generate
a $(m+1)$-dimensional commutative algebra $\scrA$. Hence, we can diagonalize the $A_i$
simultaneously (over $\CC$). We obtain a decomposition of $\CC^v$ into $m+1$
(maximal common) eigenspaces $V_0, V_1, \ldots, V_n$ of dimensions
$f_0, f_1, \ldots, f_m$. As $J \in \scrA$, one eigenspace has dimension $1$ and is spanned
by the all-ones vector ${\bf j}$. By convention, $V_0 = \<{\bf j}\>$ and $f_0 = 1$.
Let $E_j$ be the idempotent projection onto the $j$-th eigenspace $V_j$. This is a basis of
minimal idempotents of $\scrA$. We have
\begin{align*}
    & \sum_{j=0}^m E_j = I, && E_0 = v^{-1} J.\\
\intertext{There exist constants $P_{ji}$ and $Q_{ij}$ such that}
    & A_i = \sum_{j=0}^m P_{ji} E_j, && E_j = v^{-1} \sum_{i=1}^m Q_{ij} A_i.
\end{align*}
Note that the $P_{ji}$ are the eigenvalues of $A_i$.
A useful equality is
\begin{align*}
  f_i P_{ij} = k_j Q_{ji}. %\label{eq:PQ_formula}
\end{align*}

For a non-empty subset $Y \subseteq X$ of size $y$ with characteristic vector $f_Y$,
define the {\it inner distribution} $a$ of $Y$ by
\begin{align*}
    a_i = \frac1y f_Y^T A_i f_Y.
\end{align*}
Delsarte's linear programming (LP) bound states that
\begin{align}
    (aQ)_j = \frac{v}{y} f_Y^T E_j f_Y \geq 0\label{eq:delsarte_lp}
\end{align}
for all $j \in \{ 0, 1, \ldots, m\}$, see Proposition 2.3.2 in \cite{BCN}.
The inequality follows from the fact that $E_j$ is positive semidefinite.
It is known as \emph{MacWilliams transform} of the vector $a$.
It implies that we can check if $f_Y$ is orthogonal to an eigenspace $V_j$
by looking at the $(m+1){\times}(m+1)$-matrix $Q$ (instead of the usually
much larger $(v\times v)$-matrix $E_j$).

Define $\| f \| = \left( \sum f_i^2 \right)^{1/2}$ as the $2$-norm for vectors,
not the (normalized) $2$-norm $\| f \|_2 = \left( \frac1v \sum f_i^2 \right)^{1/2}$ as used
in Boolean function analysis. We distinguish both by writing
$\| \cdot \|$ and $\| \cdot \|_2$, respectively.
Then
\begin{align}
  |Y| \cdot a_i = \sum_{j=0}^m P_{ji} \| E_j f_Y\|^2. \label{eq:projection}
\end{align}
Hence, we can investigate expansion (as in \textit{expander-mixing lemma} or \textit{expander graph})
for all graphs in an association scheme using the $(m+1)\times (m+1)$-matrices.

Two important classes of association schemes are {\it metric} and {\it cometric} association schemes.
An association scheme is metric if there exists an ordering of the $A_i$
and polynomials $p_i$ of degree $i$ such that
$A_i = p_i(A_1)$. An association scheme is cometric if there exists an
ordering of the $E_j$ and polynomials $q_j$ of degree $j$
such that $E_j = q_j(E_1)$ (here multiplication is $\circ$, the Hadamard product).
Metric association schemes are also known as {\it distance-regular graphs} or {\it $P$-polynomial
association schemes}. Cometric association schemes are also known as {\it $Q$-polynomial
association schemes}. All our three main examples belong to association schemes
which are metric and cometric.

\begin{example}[Hamming Scheme]
  Let $X = \{ 0, \ldots, q-1 \}^n$. Say that $(x, y) \in R_i$ if $x$ and $y$
  have Hamming distance $i$. This defines the $n$-class Hamming scheme, see \cite[\S9.2]{BCN}.
  Particularly, $(X, R_1)$ is the Hamming graph $H(n, q)$.
  Formulas for the $P_{ji}$'s are given by the Krawtchouk polynomials,
  see \cite[p.~39]{Delsarte1973}.
\end{example}

\begin{example}[Johnson Scheme]
  Let $n-m \geq m$.
  Let $X = \binom{[n]}{m}$. Say that $(x, y) \in R_i$ if $|x \setminus y| = i$.
  This defines the Johnson scheme $J(n, m)$, an $m$-class association scheme, see \cite[\S9.1]{BCN}.
  In particular, $(X, R_1)$ is the Johnson graph $J(n, m)$ and $(X, R_m)$ is the \textit{Kneser graph}.
  Formulas for the $P_{ji}$'s are given by the Eberlein polynomials, see Delsarte \cite[p.~48]{Delsarte1973}.
\end{example}

\begin{example}[Grassmann Scheme]
    Let $n \geq m$ and $V = V(n, q)$.
    Let $X = \binom{V}{m}$. Say that $(x, y) \in R_i$ if $\dim(x \cap y) = m-i$.
  This defines the Grassmann scheme $J_q(n, m)$ (also: $q$-Johnson scheme), an $m$-class association scheme, see \cite[\S9.3]{BCN}.
  In particular, $(X, R_1)$ is the Grassmann graph $J_q(n, m)$.
  Formulas for the $P_{ji}$'s are given by the $q$-Eberlein polynomials, see Delsarte \cite{Delsarte1976a}.
\end{example}

All formulas for the $P_{ji}$ for these examples which are known to the author can be found in \cite{BCIMG2018}.

Another useful way of determining if $f_Y \in \sum_{j \in J} V_j$ for some subset $J$ of $\{ 0, \ldots, m \}$
is \textit{design-orthogonality}.
Let $Y$ be a subset of $X$.
Suppose that $\scrZ$ is a family of subsets of $X$ such that $|Y \cap Z| = \frac{|Y|\cdot |Z|}{|X|}$
for all $Z \in \scrZ$. Note that $f_Y^T f_Z = |Y \cap Z|$.
Then for all $1 \leq j \leq m$, $E_j f_Y = 0$ or $E_j f_Z = 0$.
For example, take the Johnson scheme $J(n, m)$. There $X = \binom{[n]}{m}$.
Recall that a $d$-$(n, m, \lambda)$ design $Z$ is a subset of $X$ such that
any $d$-set of $[n]$ lies in precisely $\lambda$ elements of $Z$.
Suppose that $\scrZ$ is a family of $d$-$(n, m, \lambda)$ designs
and that $|Y \cap Z| = \frac{|Y|\cdot |Z|}{|X|}$ for all $Z \in \scrZ$.
Then $Y$ and $Z$ are design-orthogonal. In particular, $Y$ is an \textit{antidesign}.
Indeed, if a non-trivial $d$-$(n, m, \lambda)$ design $Z$ exists,
then for all images of $Z$ under the automorphism group,
the $f_Z$ span $V_0 + V_{d+1} + \cdots + V_m$ in the canonical
ordering of the eigenspaces of $J(n, m)$.
Hence, $f_Y \in V_0 + V_1 + \cdots + V_d$.
More generally, we can replace $f_Y$ and $f_Z$ by functions
$f \in V_0 + V_1 + \cdots + V_d$ and $g \in V_0 + V_{d+1} + \cdots + V_m$
and everything said remains true. In particular,
\begin{align}
 f^Tg = \frac{f^T{\bf j} \cdot g^T {\bf j}}{|X|}.\label{eq:designorth}
\end{align}
If $m$ divides $n$, then there exists a partition of $[n]$, respectively,
$V(n, q)$ into $m$-sets, respectively, $m$-spaces.
In the vector space case, this is a {\it spread}.
For a fixed $J(n, m)$ or $J_q(n, m)$, let $\scrZ$ denote the
set of all such partitions. Then the $f_Z$ with $Z \in \scrZ$
span $V_0 + V_2 + \cdots + V_m$. Cameron-Liebler sets are
often defined as those $Y$ such that $|Y \cap Z| = |Y| \cdot |Z|/|X|$
for all $Z \in \scrZ$. See \cite{DBSS2016} for $J(n, m)$
and \cite{BDBD2019} for $J_q(n, m)$.

Also see Ito \cite{Ito2004} for a treatment of the topic of design-orthogonality
from the point of view of \textit{coset geometries}.

\subsection{Graded Posets} %\label{sec:lattice}

Let $(\scrX, \subseteq)$ be a graded poset of rank $n$.
% \footnote{%
% As before, the reader does not need to worry about us reusing the letter $n$:
% it will be the same.}
That is, a partially ordered set with
a rank function.
Let $X_i$ denote all elements of $\scrX$ of rank $i$.
Let $D$ be a subset of $\scrX$. For $D \in X_i$, define
$x_D\colon X \rightarrow \{ 0, 1 \}$ as the (Boolean) function with $x_D(S) = 1$ if
$D \subseteq S$ and $x_D(S) = 0$ otherwise.

Now only consider $x_P$ with $P \in X_1$.
Then for any $f: \scrX \rightarrow \CC$,
$f$ can be written as a multivariate polynomial
with $x_P$ as variables and degree at most $n$.
For instance,
\[
 \sum_{S \in \scrX} f(S) \left( \prod_{P \in X_1: P \subseteq S} x_P \right)
 \left( \prod_{P \in X_1: P \nsubseteq S} 1-x_P\right).
\]
We say that $f$ has degree $d$ if $d$ is the minimum degree of such a polynomial.
If $D \in \scrX_d$, then $x_D$ has degree $d$ as
\[
 x_D = \prod_{P \in X_1: P \nsubseteq D} x_P.
\]
If we consider $f: x_m \rightarrow \CC$, then $\deg(f) \leq m$.
This can be seen by considering
\[
 \sum_{S \in X_m} f(S) \prod_{P \in X_1: P \nsubseteq S} x_P.
\]

In several examples of metric association schemes, there is a metric ordering of relations
such that $(x, y) \in R_1$ if and only if $x$ and $y$ have minimal distance in $\scrX$ restricted to $X$
(usually, either distance $1$ if $X = \scrX$ or distance $2$ if $X$ is one layer of $\scrX$).
Similarly, there is often a cometric ordering such that
$f$ is a degree $d$ function if and only if $f \in V_0 + V_1 + \ldots + V_d$,
but not $f \in V_0 + V_1 + \ldots + V_{d-1}$. See also \cite{Delsarte1976a,Roos1982}.

In the literature on cometric association schemes, what we call \textit{degree}
is usually referred to as \textit{dual degree}. For instance, see \cite{BGKM2003}.
For sets $Y, Z \subseteq X$, let $f_Y$ and $f_Z$ be the corresponding Boolean functions,
that is, from now on we identify characteristic vectors and Boolean functions.

\begin{enumerate}[(i)]
 \item We have $\deg(f_Y) = \deg(1-f_Y)$.
 \item If $Y$ and $Z$ disjoint, then $f_Y+f_Z$ is Boolean with degree at most $\max(\allowbreak \deg(f_Y), \allowbreak \deg(f_Z))$.
 \item If $Z \subseteq Y$, then $f_Y - f_Z$ is Boolean with degree at most $\max(\deg(f_Y), \allowbreak \deg(f_Z))$.
\end{enumerate}

\begin{example}[Hamming Graph]
    There are (at least) two natural choices for a poset associated with
    the Hamming graph.
    The first one is very natural for $q=2$
    if we identify $\{ 0, 1 \}^n$ with the subset poset of $[n]$.
    Namely, $\scrX = \{ 0, \ldots, q-1 \}^n$ and we say that $x \subseteq y$
    if $x_i = y_i$ for all $i$ with $x_i \neq 0$.
    Here, for the Hamming scheme, $X = \scrX$.

    The second choice is that $\scrX$ is the set of all words of length $n$
    out of $\{ 0, \ldots, q-1 \} \cup \{ \cdot \}$ and we say that $x \subseteq y$
    if $x_i = y_i$ for all $i$ with $x_i \neq \cdot$. This is called the {\it Hamming lattice}.
    For the Hamming scheme, $X = X_n$.
\end{example}

\begin{example}[Johnson Graph]
  Let $\scrX$ be all subsets of $[n]$.
  Then $X=X_m$ for the Johnson scheme $J(n, m)$.
\end{example}

\begin{example}[Grassmann Graph]
    Let $\scrX$ be all subspaces of $V(n, q)$, the {\it subspace lattice} of $V(n, q)$.
    Then $X=X_m$ for the Grassmann scheme $J_q(n, m)$.
\end{example}

Suppose that $Y$ is a subset of vertices in any of the three examples.
Then $\deg(f_Y) \leq d$ if and only if
$f_Y \in V_0 + V_1 + \ldots + V_d$ in the usual cometric ordering.
Similarly, distance in the poset corresponds naturally to distance
in the graph.
% Note that if the inner distribution of $Y$
% is known, then \eqref{eq:delsarte_lp} sometimes provides an easy method
% for determining $\deg(f_Y)$. In some special cases,
% the aforementioned concept of {\it design-orthogonality} might be prefereable.

Suppose that we are in the Johnson graph or the Grassmann graph.
If $\deg(f) \leq d$, then we can write $f$ as
\[
 f = \sum_{D \in X_d} c_D x_D
\]
for uniquely determined constants $c_D$. Hence, we only need
to consider $X_d \times X_m$ in this case.

\subsection{Equitable Partitions}%\label{subsec:equit}

The concept of {\it equitable partition} is also related.
For instance, they often correspond to
the extremal examples in spectral bounds, see \cite{Haemers1995} and
our discussion in \S\ref{sec:ratio}.
Let $\Gamma$ be a graph of order $v$ with vertex set $X$. For $A,B$ sets of vertices of $\Gamma$, let $E(A, B)$
denote the set of edges of $\Gamma$ in $A \times B$.
Now let $\scrX = \{ X_1, X_2, \ldots, X_\nu \}$ be a partition of the vertex set $X$.
Let $e_{ij}$ be the average number of edges from $X_i$ to $X_j$, that is $e_{ij} = |E(X_i,X_j)|/|X_i|$.
The matrix $E = (e_{ij})$ of order $\nu$ is the \textit{quotient matrix} of the partition.
Let $\mu_1 \geq \mu_2 \geq \ldots \geq \mu_\nu$ be the eigenvalues of $E$.

\begin{theorem}%\label{thm:inter}
    We have $\lambda_j \geq \mu_j \geq \lambda_{v-\nu+j}$ for all $1 \leq j \leq \nu$.
\end{theorem}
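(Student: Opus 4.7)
The plan is to reduce the claim to the classical Cauchy interlacing theorem for symmetric matrices applied to a well-chosen compression of the adjacency matrix $A$ of $\Gamma$.

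First I introduce the \emph{characteristic matrix} of the partition: let $S$ be the $v \times \nu$ matrix whose $i$-th column is $f_{X_i}$, the characteristic vector of the $i$-th part. Since the parts are disjoint, $S^T S = \mathrm{diag}(|X_1|, \ldots, |X_\nu|)$ is a positive definite diagonal matrix, so it has a well-defined positive square root $D^{1/2} := (S^T S)^{1/2}$. A direct entrywise computation gives
\[
 \bigl(S^T A S\bigr)_{ij} = \sum_{x \in X_i, y \in X_j} A_{xy} = |E(X_i, X_j)|,
\]
so the quotient matrix satisfies $E = (S^T S)^{-1} S^T A S$.

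Next I consider the matrix $\tilde S := S D^{-1/2}$, whose columns are pairwise orthogonal unit vectors in $\RR^v$. Then
\[
 \tilde S^T A \tilde S = D^{-1/2} S^T A S\, D^{-1/2} = D^{1/2} E D^{-1/2},
\]
which is a symmetric matrix similar to $E$, and therefore has exactly the same eigenvalues $\mu_1 \geq \mu_2 \geq \ldots \geq \mu_\nu$. In particular, $E$ has real eigenvalues even though $E$ itself is generally not symmetric.

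Finally, I invoke Cauchy interlacing in the following form: if $B$ is a symmetric $v \times v$ matrix and $U$ is a $v \times \nu$ matrix with orthonormal columns, then the eigenvalues $\mu_1 \geq \ldots \geq \mu_\nu$ of $U^T B U$ interlace those of $B$ in the sense $\lambda_j(B) \geq \mu_j \geq \lambda_{v-\nu+j}(B)$. This is immediate from the Courant--Fischer min-max characterization of eigenvalues applied to the $\nu$-dimensional subspace $\mathrm{Im}(U)$. Applying this with $B = A$ and $U = \tilde S$ yields the desired inequality $\lambda_j \geq \mu_j \geq \lambda_{v-\nu+j}$.

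The only mildly delicate step is the symmetrization via $D^{1/2}$: the raw quotient matrix $E$ is not symmetric, so Cauchy interlacing does not apply directly, and one must first exhibit $E$ as similar to a genuine compression $\tilde S^T A \tilde S$. Once this is in place, the remainder is a routine application of min-max.
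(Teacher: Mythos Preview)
Your proof is correct and is exactly the standard argument due to Haemers: symmetrize the quotient matrix via $D^{1/2}ED^{-1/2}=\tilde S^TA\tilde S$ and then apply Cauchy interlacing through Courant--Fischer. The paper itself does not prove this theorem---it is a survey and simply states the result, implicitly deferring to \cite{Haemers1995}---so there is nothing further to compare.
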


If for all $i,j$ each vertex $x \in X_i$ has exactly $e_{ij}$ neighbors in $X_j$,
then the partition is an equitable partition.
Other words for {\it equitable partition} include \textit{perfect coloring},
\textit{regular partition}, and, for $\nu=2$, (often) \textit{regular set}
or (in parts of finite geometry) \textit{intriguing set}.
We have the following characterization of equality:

\begin{theorem}%\label{thm:inter_eq}
    Suppose that $\scrX$ is equitable.
    Then there exist $1 \leq i_1 \leq \ldots \leq i_\nu \leq v$
    such that $\lambda_{i_j} = \mu_j$.
    Furthermore, $f_{X_i}$ lies in $V(\lambda_{i_1}) + \cdots + V(\lambda_{i_\nu})$.
\end{theorem}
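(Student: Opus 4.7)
The plan is to encode the partition via its $v \times \nu$ characteristic matrix $P$, whose $i$-th column is $f_{X_i}$, and to exploit the fact that the equitable condition is equivalent to the matrix identity $AP = PE$. Indeed, the $(x,j)$-entry of $AP$ counts the number of neighbors of $x$ lying in $X_j$, which for $x \in X_i$ is exactly $e_{ij}$, matching the $(x,j)$-entry of $PE$.

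Next I would symmetrize $E$. The double count of edges between classes gives $|X_i| e_{ij} = |E(X_i, X_j)| = |E(X_j, X_i)| = |X_j| e_{ji}$, which implies that $\tilde E := D^{1/2} E D^{-1/2}$ is symmetric, where $D := \mathrm{diag}(|X_1|, \ldots, |X_\nu|)$. Setting $\tilde P := P D^{-1/2}$ one checks $\tilde P^T \tilde P = I$ and $A \tilde P = \tilde P \tilde E$. Taking an orthonormal eigenbasis $v_1, \ldots, v_\nu$ of $\tilde E$ with $\tilde E v_j = \mu_j v_j$, we obtain $A(\tilde P v_j) = \mu_j \tilde P v_j$, and $\tilde P v_j \neq 0$ because $\tilde P$ has full column rank. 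Hence each $\mu_j$ is an eigenvalue of $A$, so we may pick $i_1 \leq \cdots \leq i_\nu$ with $\lambda_{i_j} = \mu_j$; the weak monotonicity is automatic since both sequences are listed in weakly decreasing order, and the $\nu$ linearly independent vectors $\tilde P v_j$ ensure that sufficient multiplicities of $A$ are available.

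For the second claim, observe that the span of $f_{X_1}, \ldots, f_{X_\nu}$ is the column space of $P$, equivalently of $\tilde P$, equivalently (by expanding along the basis $v_1, \ldots, v_\nu$ of $\RR^\nu$) the span of $\tilde P v_1, \ldots, \tilde P v_\nu$. Since each $\tilde P v_j$ lies in $V(\lambda_{i_j})$, every vector in this span, and in particular $f_{X_i}$, lies in $V(\lambda_{i_1}) + \cdots + V(\lambda_{i_\nu})$.

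The main subtlety I foresee is that $E$ itself is not generally symmetric, so one cannot directly invoke the spectral theorem for $E$ and transport eigenvectors to $A$. The workaround is the diagonal scaling to $\tilde E$, and the key algebraic input enabling it is the edge-double-count $|X_i| e_{ij} = |X_j| e_{ji}$, which holds for every vertex partition; only the identity $AP = PE$ uses equitability. Once the symmetrization is in place, the argument reduces to the standard observation that an invariant subspace for a symmetric operator is a direct sum of eigenspaces.
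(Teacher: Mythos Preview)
Your argument is correct and is the standard proof via the characteristic matrix and the symmetrization $\tilde E = D^{1/2} E D^{-1/2}$; every step checks out, including the multiplicity remark (the $\nu$ independent vectors $\tilde P v_j$ force $\dim V(\mu) \geq \#\{j: \mu_j = \mu\}$, so the indices $i_1 \leq \cdots \leq i_\nu$ can indeed be chosen). The paper itself is a survey and does not supply a proof of this theorem; it is stated as a known fact (see the reference to Haemers \cite{Haemers1995}), and your approach is precisely the classical one found in that literature.
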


The most interesting case for us is that of equitable bipartitions, that is, $\nu = 2$.
For example, take (the distance-$1$-graph on) the hypercube.
As $\mu_1$ is the degree for a regular graph, $\mu_1 = n$.
The eigenvalues of the hypercube are $n, n-2, \ldots, -n+2, -n$.
If $\mu_2 \geq n-2d$, then $f_{X_i}$ has degree at most $d$.

Special cases of equitable partitions often get rediscovered
and are investigated from various points of views,
so many names for relatively similar things exist.
For instance, finite geometry alone uses {\it Cameron-Liebler set}, {\it tight set},
{\it $m$-ovoid}, {\it hemisystem} and {\it intriguing set} for particular equitable partitions.
Design theory has {\it design} and {\it antidesign} \cite{Roos1982}. Coding theory
has {\it completely regular codes}, see \cite{Delsarte1973,Martin1994,GZ2025}.
In our three families, Boolean degree $1$ functions correspond to
{\it completely regular strength $0$ codes of covering radius $1$}
(but there is no such correspondence for larger degree).
Latin squares have {\it $k$-plexes}.
A more recent example is the notion of \textit{graphical design}
and \textit{extremal graphical design}, for instance,
see \cite{Golubev2020,Steinerberger2020,Zhu2023}.
% If we also consider objects close to an equitable partition,
% then we would need to talk about things such as Szemerédi’s regularity
% lemma \cite{Szemeredi1976} (and maybe its spectral proof \cite{Tao2012}).
Finally, let us mention that there is vast literature on equitable partitions themselves.
For instance, equitable partitions of graphs of degree
at most $5$ are classified \cite{DF2021}.
% Clearly, there is no hope in
% giving a complete list, but we hope that this paragraph
% gives some hints for where to look.

\subsection{Tactical Decompositions}%\label{sec:Tactical}

Let $X$ and $Y$ be finite sets and let $\I \in X \times Y$.
Call the elements of $X$ {\it points}, the elements of $Y$ {\it blocks},
and $\I$ an {\it incidence relation}. If $(x, y) \in \I$, then
$x$ and $y$ are {\it incident}. We write $x \,\I\, y$.

Let $\scrX = \{ X_1, X_2, \ldots, X_s \}$ be a partition of $X$
and let $\scrY = \{ Y_1, Y_2, \ldots, Y_t\}$ be a partition of $Y$.
Then $(\scrX, \scrY)$ is a \textit{tactical decomposition} (or: {\it generalized orbit})
if the following holds:
\begin{enumerate}
 \item For $x \in X_i$, the number of $y \in Y_j$ with $x \,\I\, y$ only depends on $j$.
 \item For $y \in Y_i$, the number of $x \in X_j$ with $x \,\I\, y$ only depends on $i$.
\end{enumerate}
Define a (bipartite) graph $\Gamma = (X \cup Y, \sim)$ where $x \sim y$ if $x \,\I\, y$.
Hence, a tactical decomposition is an equitable partition of a bipartite incidence graph.
% Another word for a tactical decomposition is \textit{generalized orbit}.

Many examples for tactical decomposition come from finite groups.
In this context Block's lemma \cite{Block1967} is important as it shows
(under certain assumptions) for the group case that $s \leq t$.
We use the formulation given by Vanhove \cite{FV}.

\begin{lemma}[Block's Lemma]\label{lem:block}
Let $G$ be a group acting on two finite sets $X$ and $Y$, with respective sizes $n$ and $m$.
Let $O_1,\ldots,O_s$, respectively $P_1,\ldots,P_t$ be the orbits of the action on $X$, respectively $Y$.
Suppose that $R \subseteq X \times Y$ is a $G$-invariant relation and call $B = (b_{ij})$ the $n \times m$ matrix of this relation, i.e. $b_{ij}=1$ if and only if $x_i R y_j$ and $b_{ij} = 0$ otherwise.
\begin{itemize}
\item[(i)] The vectors $B^T f_{O_i}$, $i=1, \ldots, s$, are linear combinations of the vectors $f_{P_j}$.
\item[(ii)] If $B$ has full row rank, then $s \leq t$. If $s = t$, then all vectors $f_{P_j}$ are linear combinations
of the vectors $B^T f_{O_i}$, hence $f_{P_j} \in \im(B^T)$.
\end{itemize}
\end{lemma}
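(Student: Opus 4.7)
The plan is to interpret $B^{T}$ as a linear map $\CC^{X}\to\CC^{Y}$ and to track what it does to the spans of the orbit indicators on each side. For \emph{part (i)}, the key observation is the direct computation
\[
 (B^{T} f_{O_i})_{y} \;=\; \sum_{x\in O_i} b_{x,y} \;=\; |\{ x\in O_i : x\,R\,y\}|.
\]
Because $O_i$ is $G$-stable and $R$ is $G$-invariant, this count is unchanged when $y$ is replaced by $gy$ for any $g\in G$. Hence it depends only on the $G$-orbit containing $y$, i.e.\ on which $P_j$ contains $y$. In other words, $B^{T}f_{O_i}$ is constant on each $P_j$, which is exactly the assertion that it is a linear combination of the $f_{P_j}$.

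For \emph{part (ii)}, let $W_{X} = \mathrm{span}(f_{O_1},\ldots,f_{O_s})$ and $W_{Y}=\mathrm{span}(f_{P_1},\ldots,f_{P_t})$; these have dimensions $s$ and $t$ respectively, since the orbits are nonempty and pairwise disjoint. Part (i) gives $B^{T}(W_{X})\subseteq W_{Y}$. The hypothesis that $B$ has full row rank is equivalent to $B^{T}$ being injective as a map $\CC^{X}\to\CC^{Y}$: full row rank of $B$ says the rows of $B$ are linearly independent, i.e.\ the columns of $B^{T}$ are linearly independent, i.e.\ $\ker(B^{T})=0$. Consequently the restriction $B^{T}|_{W_{X}}$ is injective, so $\dim B^{T}(W_{X})=s$. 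Combining this with $B^{T}(W_{X})\subseteq W_{Y}$ and $\dim W_{Y}=t$ yields $s\le t$.

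When $s=t$, the same dimension count forces $B^{T}(W_{X})=W_{Y}$, so every $f_{P_j}$ already lies in $B^{T}(W_{X})\subseteq \im(B^{T})$; writing $f_{P_j}=B^{T}\bigl(\sum_{i}\alpha_i f_{O_i}\bigr)$ exhibits it as a linear combination of the vectors $B^{T}f_{O_i}$ from part (i), which completes the claim.

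There is no real obstacle: the argument is a two-line dimension count on top of the orbit-counting identity in (i). The one place requiring a moment of care is translating ``$B$ has full row rank'' into ``$B^{T}$ is injective'' with the correct source and target spaces; once that identification is in hand, (i) does all the geometric work and the rest is bookkeeping.
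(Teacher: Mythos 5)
Your proof is correct, and it is the standard argument for Block's lemma: the orbit-counting identity showing $B^{T}f_{O_i}$ is constant on each $P_j$, followed by the dimension count using injectivity of $B^{T}$ on $\mathrm{span}(f_{O_1},\ldots,f_{O_s})$. The survey itself states the lemma without proof (citing Vanhove's formulation), and your argument matches the one given in that source, so there is nothing to flag.
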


Under the conditions of Lemma \ref{lem:block}(ii), groups can be used to construct
low degree Boolean functions. In particular, it applies to Grassmann graphs
due to a result by Kantor \cite{Kantor1982}.
More generally, most of the examples
in \S\ref{sec:exc} were found by assuming a group action and investigating
the corresponding tactical decomposition with computational tools such as
integer linear programming (ILP) solvers and group theoretic methods.

\subsection{Erd\H{o}s-Ko-Rado Theorems \& Hoffman's Ratio Bound}\label{sec:ratio}

Let us briefly mention the Erd\H{o}s-Ko-Rado (EKR) theorem \cite{EKR}.
% \footnote{%
% This survey is written for for SUMMIT 280,
% one of them Péter Frankl. Hence, mentioning the Erd\H{o}s-Ko-Rado theorem
% seems to be a necessity.}
The EKR theorem for sets states that a family $Y$ of pairwise intersecting
elements of $\binom{[n]}{m}$, $2 \leq m \leq n/2$,
has size at most $\binom{n-1}{m-1}$. For $m < n/2$ equality occurs
if and only if $Y$ consists of all $m$-sets containing some fixed element.
Essentially the same holds for $\gauss{V}{m}$, $V = V(n, q)$.
This can be seen using Delsarte's LP bound or, more famously, the well-known
Hoffman's Ratio Bound which bounds the size
of a \textit{coclique} (or: {\it independent set}, {\it stable set}) of a graph.

\begin{theorem}[Hoffman's Ratio Bound]
 Let $A$ be the adjacency matrix of a graph $\Gamma$
 of order $v$ with degree $k$ and smallest eigenvalue $\lambda$.
 Suppose that $Y$ is a coclique of $\Gamma$.
 Then $|Y| \leq \frac{v}{1 - k/\lambda}$.
 If equality holds, then $f_Y \in \< {\bf j} \> + V(\lambda)$.
\end{theorem}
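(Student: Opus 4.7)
The plan is to find a positive semidefinite matrix $M$ built from $A$, $I$, and $J$ that vanishes on $\langle {\bf j} \rangle$, and then exploit the fact that $f_Y^T A f_Y = 0$ because $Y$ is a coclique. Concretely, I would set
\[
 M = A - \lambda I - \frac{k-\lambda}{v} J.
\]
Because $\Gamma$ is $k$-regular, ${\bf j}$ is an eigenvector of $A$ with eigenvalue $k$, and the orthogonal complement $\langle {\bf j}\rangle^\perp$ is $A$-invariant with all eigenvalues in $[\lambda, \lambda_2]$. On $\langle {\bf j} \rangle$ the matrix $M$ acts as $k - \lambda - (k-\lambda) = 0$, and on $\langle {\bf j} \rangle^\perp$ the $J$-term is zero so $M$ acts as $A - \lambda I$, whose eigenvalues on this subspace are $\lambda_i - \lambda \geq 0$. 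Hence $M \succeq 0$, and $\ker(M) = \langle {\bf j} \rangle + V(\lambda)$.

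Next I would plug $f_Y$ into this inequality. Since $Y$ is a coclique, no two elements of $Y$ are adjacent, so $f_Y^T A f_Y = 0$. Also $f_Y^T f_Y = |Y|$ and $f_Y^T J f_Y = |Y|^2$. Therefore
\[
 0 \;\leq\; f_Y^T M f_Y \;=\; -\lambda |Y| - \frac{k-\lambda}{v} |Y|^2.
\]
Dividing by $|Y| > 0$ and rearranging gives $(k - \lambda)|Y| \leq -\lambda v$, which (using $\lambda < 0$ when $\Gamma$ has at least one edge, so $k > 0$) is exactly $|Y| \leq v/(1 - k/\lambda)$.

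For the equality characterization, I would observe that if $|Y| = v/(1 - k/\lambda)$ then $f_Y^T M f_Y = 0$; combined with $M \succeq 0$ this forces $M f_Y = 0$, so $f_Y \in \ker(M) = \langle {\bf j} \rangle + V(\lambda)$, as claimed.

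I do not expect any serious obstacle here: the only nontrivial ingredient is guessing the correct correction term $\tfrac{k-\lambda}{v} J$, which is determined by the two requirements that $M$ be PSD and that $M{\bf j} = 0$. Everything else is a one-line computation using regularity and the fact that $f_Y$ is supported on an independent set.
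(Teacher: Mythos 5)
Your proof is correct; the paper states Hoffman's Ratio Bound without proof, and your argument is the standard positive-semidefinite-certificate proof (equivalently, Delsarte's LP bound applied to the single matrix $A - \lambda I - \frac{k-\lambda}{v}J$), which is exactly how this classical result is usually established. The only implicit hypotheses worth flagging are that $Y \neq \emptyset$ (otherwise the division by $|Y|$ fails, though the bound is then trivial) and that $\Gamma$ has an edge so that $\lambda < 0 < k$ and $V(\lambda) \perp {\bf j}$, which is what makes $\ker(M) = \< {\bf j} \> \oplus V(\lambda)$ and the final sign manipulation legitimate.
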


It has been observed by Lov\'asz \cite{Lovasz1979}
that this implies $|Y| \leq \binom{n-1}{m-1}$ for
an intersecting family in $J(n, m)$ (i.e., a coclique of $\Gamma_m$).
For $n > 2m$, equality implies that $f_Y \in V_0 + V_1$, so $\deg(f_Y) \leq 1$.
Similarly, Frankl and Wilson did show that
$|Y| \leq \gauss{n-1}{m-1}$ for an intersecting family in $J_q(n, m)$ \cite{FW1986}.
Similarly, for $n \geq 2m$, equality implies $f_Y \in V_0 + V_1$, that is, $\deg(f_Y) \leq 1$.
See \cite{GM2016} for a unified treatment.
In general, there has been a fruitful interaction between
the theory of intersecting families and low degree Boolean functions
going back to the work by Friedgut \cite{Friedgut2008}, and Dinur and Friedgut \cite{DF2009}.

\section{The Hypercube}

The most basic question for any graded poset under consideration is the classification
of the degree $1$ Boolean functions.\footnote{%
The term \textit{affine Boolean function} is maybe a less clumsy.
Here we avoid it due to the potential confusion with affine spaces.}
For the hypercube, this is trivial and one
sees that the only examples are $0$, $1$, $x_i$, and $1-x_i$: Say,
$f: \{ 0, 1\}^n \rightarrow \{ 0, 1 \}$ is not the constant function.
Write $f(x) = c+\sum c_i x_i$.
Clearly, $c, c_1, \ldots, c_n \in \{ 0, 1\}$ by considering the elements $x$
of $\{ 0, 1\}^n$ with weight at most $1$.
As we can always replace $f$ by $1-f$, assume that $f(0)=0$ or, equivalently,
$c=0$. By considering the elements of $\{ 0, 1\}^n$ of weight $2$, we see that
at most one $c_i$ can be equal to $1$.
In general, the spectral analysis on the hypercube is easy
as it possesses an orthogonal basis corresponding
to the well-known {\it Walsh-Hadamard transform}.

The approximate case is more interesting for degree $1$. It is known as the
Friedgut-Kalai-Naor (FKN) Theorem \cite{FKN2002}.
In line with the usual notation in Boolean function analysis,
we write $\|f\|_2^2 = \frac{1}{|X|} \sum_{x \in X} f(x)^2$.
We say that $f$ is {\it $\eps$-close} to $g$ if $\|f-g\|_2^2 < \eps$.
One possible formulation is as follows, see also \cite[\S2.6]{BFA}.

\begin{theorem}[FKN Theorem]
 Suppose that $f: \{ 0, 1\}^n \rightarrow \{ 0, 1 \}$ is $\eps$-close to $g$
 for some degree $1$ function $g: \{ 0, 1\}^n \rightarrow \RR$.
 Then $f$ is $O(\eps)$-close to $0$, $1$, $x_i$, or $1-x_i$ for some $i \in [n]$.
%  Then there exists an $i \in [n]$ such that one of
%  $\Pr(f \neq 0) = O(\eps)$, $\Pr(f \neq 1) = O(\eps)$,
%  $\Pr(f \neq x_i) = O(\eps)$, or $\Pr(f \neq 1-x_i) = O(\eps)$ holds.
\end{theorem}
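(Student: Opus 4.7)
The plan is to replace $g$ by the orthogonal projection $L$ of $f$ onto the degree $\leq 1$ eigenspace $V_0 + V_1$, then exploit that $f$ is Boolean to force $L$ itself to be $O(\eps)$-close in $L^2$ to one of the trivial affine Boolean functions.

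First, note that $L$ is the $L^2$-best degree-$1$ approximation of $f$, so the hypothesis gives $\|f - L\|_2^2 \leq \eps$. Write $L(x) = a_0 + \sum_{i=1}^n a_i x_i$ and observe that $\|L\|_2 \leq \|f\|_2 \leq 1$ by the projection property. Bonami's $(2,4)$-hypercontractive inequality $\|L\|_4 \leq \sqrt{3}\,\|L\|_2$, valid because $L$ has Fourier degree at most $1$, bounds all low moments of $L$ by universal constants.

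Second, I would exploit the pointwise identity $f(1-f) = 0$. With $H := f - L$, this rearranges to
\[
L(1-L) \;=\; H(2L - 1) + H^2.
\]
Squaring, taking expectations, and using $\|H\|_2^2 \leq \eps$ together with the hypercontractive moment bounds on $L$ and the pointwise estimate $|H| \leq 1 + |L|$ (inherited from $f \in \{0,1\}$) yields $\|L(1-L)\|_2^2 = O(\eps)$.

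Third, I would expand $\|L(1-L)\|_2^2$ in the Fourier basis, using that $L$ is affine in the independent $\{0,1\}$-valued variables $x_i$. The resulting expression is a manifestly non-negative combination whose dominant terms are a piece of the form $a_0(1 - a_0) \sum_{i \geq 1} a_i^2$, a cross-term $\sum_{1 \leq i < j} a_i^2 a_j^2$, and a discrepancy controlled by $1 - \|L\|_2^2$. The $O(\eps)$ bound forces $\sum_{i<j} a_i^2 a_j^2 = O(\eps)$, hence at most one $a_{i_0}$ with $i_0 \geq 1$ is non-negligible; a short case analysis on whether such a dominant index exists, combined with $\|L\|_2^2 \geq 1 - \eps$ (which re-uses the triangle inequality and $\|f - L\|_2^2 \leq \eps$), then forces $L$ to be $O(\eps)$-close in $L^2$ to one of $0$, $1$, $x_{i_0}$, or $1 - x_{i_0}$, which propagates to $f$ through $\|f - L\|_2^2 \leq \eps$.

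The main technical obstacle is the second step: the identity $L(1-L) = H(2L-1) + H^2$ mixes $H$, which has small $L^2$ norm but no control on higher moments (it has full Fourier degree), with $L$, which is bounded only in low $L^p$ and not pointwise. A naive Cauchy-Schwarz loses a factor $\sqrt{\eps}$; to recover the sharp $O(\eps)$ one uses $|H| \leq 1 + |L|$ to re-express high moments of $H$ in terms of moments of $L$, and then applies Bonami-Beckner. This careful moment estimation, and ensuring no cancellation can hide a second large Fourier weight in the expansion of $\|L(1-L)\|_2^2$, is the crux of the argument.
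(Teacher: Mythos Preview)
The paper is a survey and does not include a proof of the FKN Theorem; it merely states the result and refers to \cite{FKN2002} and \cite[\S2.6]{BFA}. There is thus no in-paper argument to compare your proposal against.

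That said, your outline follows the standard route found in those references: pass to the degree-$\leq 1$ projection $L$, use the Boolean identity $f(1-f)=0$ to show $L$ is nearly $\{0,1\}$-valued, and then use hypercontractivity of degree-$1$ polynomials to pin $L$ down to a dictator or constant. One correction: the inequality $\|L\|_2^2 \geq 1-\eps$ is false in general, since $\|L\|_2^2 \leq \|f\|_2^2 = \Pr[f{=}1]$ can be arbitrarily small (take $f$ close to the zero function). The fix is to treat the cases where $\Pr[f{=}1]$ is close to $0$ or $1$ separately---there $f$ is already $O(\eps)$-close to a constant---and only invoke the remainder of the argument when $\Pr[f{=}1]$ is bounded away from $0$ and $1$. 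Your step-two moment estimate is, as you acknowledge, the delicate point; getting the sharp $O(\eps)$ rather than $O(\sqrt{\eps})$ typically requires either working in the $\{\pm 1\}$ basis (where $f^2\equiv 1$ lets one compare the degree-$2$ Fourier level of $\ell^2$ directly against zero) or a more careful decoupling than the single substitution $|H|\leq 1+|L|$ followed by Cauchy--Schwarz provides, since that combination still loses a square root.
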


Here and in the following asymptotics with big-$O$-notation are always with respect to $n$.
For instance, read $\|f-g\|_2^2 = O(\eps)$ as
$\limsup_{n \rightarrow \infty} \|f-g\|_2^2 \leq C \eps$ for some constant $C$.
Note that $\|f-g\|_2^2 = \Pr(f \neq g)$ if $f$ and $g$ are Boolean.

This survey aims to state everything in the theory of association
schemes, so let us rephrase the FKN theorem accordingly.
This will give a better idea about how an ``FKN theorem for an association scheme''
should look like. Recall that $E_0$ and $E_1$ are idempotent matrices
which project onto $V_0 = \< j\>$ and $V_1$, respectively.

\begin{theorem}[FKN Theorem, alternative]\label{thm:fkn_alt}
 Suppose that $Y \subseteq \{ 0, 1 \}^n$
 has $\| (E_0+E_1) f_Y \|_2^2 \geq 1-\eps$.
 Then either $|Y|/2^n = O(\eps)$ or $1-|Y|/2^n = O(\eps)$ holds,
 or there exists an $i \in \{ 1, \ldots, n \}$
 such that
 $| Y \Delta \{ z \in \{ 0, 1\}^n: i \in z \} |/2^n = O(\eps)$
 or $| Y \Delta \{ z \in \{ 0, 1\}^n: i \notin z \} |/2^n = O(\eps)$.
\end{theorem}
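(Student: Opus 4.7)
The plan is to reduce Theorem~\ref{thm:fkn_alt} directly to the classical FKN theorem stated just above. First, I recall that on the hypercube the eigenspaces $V_0, V_1, \ldots, V_n$ of the Hamming scheme admit the explicit Walsh--Hadamard basis, and that under this identification $V_0 + V_1$ coincides with the space of real-valued functions of Fourier degree at most $1$. In particular, the function $g := (E_0 + E_1) f_Y$ is a real-valued degree-$1$ function in the classical sense.

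Next, I translate the spectral hypothesis $\| (E_0+E_1) f_Y \|_2^2 \geq 1 - \eps$ into the statement that $f_Y$ is $O(\eps)$-close to the degree-$1$ function $g$. Using Parseval's identity $\| f_Y \|_2^2 = \sum_{j=0}^n \| E_j f_Y \|_2^2$ together with $\| f_Y \|_2^2 = |Y|/2^n$, one obtains $\| f_Y - g \|_2^2 = \| f_Y \|_2^2 - \| g \|_2^2$, which is bounded by a constant multiple of $\eps$ once the degenerate regimes $|Y|/2^n = O(\eps)$ and $1 - |Y|/2^n = O(\eps)$ are peeled off (these directly supply the first two alternatives of the conclusion).

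Once $f_Y$ is known to be $O(\eps)$-close to the real-valued degree-$1$ function $g$, the classical FKN theorem supplies an $i \in [n]$ and a function $h \in \{ 0,\, 1,\, x_i,\, 1 - x_i \}$ with $\| f_Y - h \|_2^2 = O(\eps)$. Since both $f_Y$ and $h$ are Boolean, $\| f_Y - h \|_2^2 = |Y \triangle H|/2^n$, where $H$ is the support of $h$, and each of the four possibilities for $h$ translates into one of the stated alternatives in the conclusion (the empty set, all of $\{0,1\}^n$, $\{ z : i \in z \}$, or $\{ z : i \notin z \}$).

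The real work lies entirely in the classical FKN theorem, which is treated as a black box here. Reformulating it in the association scheme language is essentially a matter of identifying $V_0 + V_1$ with the space of Fourier degree-$1$ functions (visible from the Walsh--Hadamard transform) and keeping careful track of the $O(\eps)$ constants through the reduction; I do not foresee any conceptual obstacle beyond this bookkeeping, and indeed the value of this alternative formulation lies precisely in the fact that it is the one that generalizes cleanly to the Johnson and Grassmann schemes, where a Walsh--Hadamard basis is no longer available.
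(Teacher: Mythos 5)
Your proposal is correct and matches the paper's intent: the paper offers no separate proof, presenting Theorem~\ref{thm:fkn_alt} as a direct rephrasing of the classical FKN theorem, which is exactly the reduction you carry out (identify $V_0+V_1$ with Fourier degree $\leq 1$ via the Walsh--Hadamard basis, use Parseval to turn the spectral hypothesis into $\eps$-closeness to the degree-$1$ function $(E_0+E_1)f_Y$, and translate $\|f_Y-h\|_2^2$ into normalized symmetric difference for the four dictator outcomes). One small simplification: no ``peeling off'' of degenerate regimes is needed, since $f_Y$ Boolean gives $\|f_Y\|_2^2\leq 1$ and hence $\|f_Y-(E_0+E_1)f_Y\|_2^2=\|f_Y\|_2^2-\|(E_0+E_1)f_Y\|_2^2\leq\eps$ unconditionally.
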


A set $Y$ is called a \textit{$\ell$-junta} if $f_Y$ depends on at most
$\ell$ coordinates of its input. That is, there exists a set $L = \{ c_1, \ldots, c_\ell\} \subseteq [n]$
and a function $g\colon \{0, 1\}^\ell \rightarrow \{ 0, 1\}$
such that $f_Y(x_1, \ldots, x_n) = g(x_{c_1}, \ldots, x_{c_{\ell}})$.
Theorem \ref{thm:fkn_alt} states that an almost degree $1$ Boolean function
is close to a $1$-junta (also called a \textit{dictator}).
The term \textit{essential variable} is also used \cite{Tarannikov2000}.
Boolean degree $d$ functions have been first characterized as juntas by Nisan and Szegedy in 1994 \cite{NiS1994}.
The same result has been rediscovered by Tarannikov, Korolev, Botev in 2001 \cite[Theorem 6]{TKB2001}.

\begin{theorem}[Nisan-Szegedy Theorem]\label{thm:NS}
 Suppose that $Y \subseteq \{ 0, 1\}^n$
 has $\deg(f_Y) \allowbreak \leq d$.
 Then $f_Y$ depends on at most $d2^{d-1}$ coordinates, i.e., is a $d2^{d-1}$-junta.
\end{theorem}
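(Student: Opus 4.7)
The plan is to prove Theorem~\ref{thm:NS} by combining a ``Schwartz--Zippel on the cube'' lower bound on the support of a nonzero low-degree polynomial with the standard Fourier-analytic identity for total influence. The target inequality $k \leq d \cdot 2^{d-1}$ (where $k$ is the number of relevant coordinates) will follow from showing that (a) each relevant variable $x_i$ has influence at least $2^{1-d}$ on $f$, and (b) the sum of all influences is at most $d$.

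First I would establish the following key lemma: a real polynomial $p : \{0,1\}^m \to \RR$ of degree at most $k$ that is not identically zero is nonzero on at least $2^{m-k}$ inputs. The proof is by induction on $m$. Writing $p(x_1, x') = x_1 p_1(x') + p_0(x')$ with $\deg p_1 \leq k-1$ and $\deg p_0 \leq k$, one splits into two cases: if both $p_0$ and $p_0+p_1$ are nonzero, two inductive calls (on the slices $x_1 = 0$ and $x_1 = 1$) together contribute $2 \cdot 2^{m-1-k} = 2^{m-k}$ nonzero inputs; if exactly one of them vanishes, then the nonzero one equals $\pm p_1$ and so has degree $\leq k-1$, and a single inductive call in the lower degree still yields $2^{(m-1)-(k-1)} = 2^{m-k}$.

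Next I apply this to the discrete derivative. For a relevant coordinate $x_i$, set $(\partial_i f)(x') := f(x';\, x_i = 1) - f(x';\, x_i = 0)$; splitting $f$'s monomials according to whether they contain $x_i$ gives $\partial_i f = \sum_{S \ni i} \widehat f(S) \prod_{j \in S \setminus \{i\}} x_j$, so $\partial_i f$ is a nonzero polynomial on $\{0,1\}^{n-1}$ of degree at most $d-1$ taking values in $\{-1,0,1\}$. The key lemma gives $|\mathrm{supp}(\partial_i f)| \geq 2^{n-d}$, hence
\[
 \mathrm{Inf}_i(f) \;=\; \frac{|\mathrm{supp}(\partial_i f)|}{2^{n-1}} \;\geq\; 2^{1-d}.
\]

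Finally I pass to the Walsh--Hadamard expansion $f = \sum_S \widehat f(S) \chi_S$ on $\{\pm 1\}^n$, which has the same degree as the excerpt's monomial basis since each $\chi_S$ is a polynomial of degree $|S|$ in the $x_i$. Parseval gives $\sum_S \widehat f(S)^2 = \|f\|_2^2 \leq 1$, so
\[
 \sum_{i=1}^n \mathrm{Inf}_i(f) \;=\; \sum_S |S|\, \widehat f(S)^2 \;\leq\; d,
\]
and combining with the previous display yields $k \cdot 2^{1-d} \leq d$, i.e.\ $k \leq d \cdot 2^{d-1}$. The only nontrivial step is the ``Schwartz--Zippel on the cube'' lemma; the rest is routine Fourier bookkeeping. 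An alternative route would proceed by induction on $d$ using certificate complexity or decision-tree depth, but the influence argument is crisper and yields the stated constant directly.
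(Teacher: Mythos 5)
Your proof is correct and is essentially the standard Nisan--Szegedy argument; the paper states Theorem~\ref{thm:NS} without proof, citing \cite{NiS1994}, and that source argues exactly as you do (a support lower bound of $2^{m-k}$ for a nonzero degree-$k$ multilinear polynomial, applied to the nonzero degree-$(d-1)$ discrete derivative to get $\mathrm{Inf}_i(f)\ge 2^{1-d}$ for every relevant $i$, combined with total influence at most $d$). The only bookkeeping point to watch is the normalization in the last step: if $f$ is kept $\{0,1\}$-valued the Fourier identity reads $\sum_i\mathrm{Inf}_i(f)=4\sum_S|S|\widehat f(S)^2\le 4d\,\mathrm{Var}(f)\le d$, whereas your displayed identity $\sum_i\mathrm{Inf}_i(f)=\sum_S|S|\widehat f(S)^2$ holds verbatim only after re-encoding $f$ as $\pm1$-valued (where $\sum_S\widehat f(S)^2=1$); either way the bound $\sum_i\mathrm{Inf}_i(f)\le d$ that you need is correct, so the argument stands.
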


Is this bound tight? For $d=2$ it is.
A complete classification of Boolean degree $2$ functions was to our
knowledge first given by Carlet et al. in \cite{CCCS1991}.
% The description there is over $\GF(2)$, not the reals, but this does not matter.
The following description is due to Yuval Filmus. We do not give complements:
\begin{align*}
  &0, ~~x, ~~x \text{ AND } y = xy, ~~x \text{ XOR } y = x+y-xy, ~~xy+(1-x)z, \\
  &\text{Ind}(x{=}y{=}z) = xy+xz+yx-x-y-z+1, \\
  & \text{Ind}(x {\leq} y {\leq} z {\leq} w \text{ OR } x {\geq} y {\geq} z {\geq} w).
\end{align*}
Here $\text{Ind}$ is the indicator function.

For $d=3$, the correct answer is $10$, not $12$ as suggested by Theorem \ref{thm:NS}.
The degree $3$ case has been classified by Kirienko (computationally) in \cite{TK2001}
and Zverev (humanly) in \cite{Zverev2007}. The extremal examples belong
to a general family of Boolean degree $d$ functions due to Tarannikov from 2000 \cite{Tarannikov2000}
which depends on $3 \cdot 2^{d-1}-2$ coordinates.
The same construction has been rediscovered in 2020 by Chiarelli, Hatami, and Saks \cite{CHS2020}.
Put
\[
 H_2(x_1, x_2, x_3, x_4) = x_1 \oplus x_2 \oplus (x_1 \oplus x_3)(x_2 \oplus x_4).
\]
This is an example for $d=2$. Now define $H_d$ recursively by
\[
 H_d(x_1, x_2, y, z) = x_1 \oplus (x_1 \oplus x_2 \oplus 1) H_{d-1}(y) \oplus (x_1 \oplus x_2) H_{d-1}(z).
\]
Tarannikov gives the construction in terms of \textit{resilient functions}.
This is equivalent with the notation of essential variables, see the remark after Proposition 6.23 in \cite{BFA}
or the introduction of \cite{KV2024}.
Another relevant terms is that of a \textit{correlation-immune function}.

For $d \geq 4$, exact values are unknown.

For the upper bound, a breakthrough occured with the recent result by Chiarelli, Hatami, and Saks
who did show an upper bound of $O(2^d)$ for the number of essential variables \cite{CHS2020}.
The current best upper bound is due to Wellens \cite{Wellens2022}.
We obtain the following state of the art:

\begin{theorem}\label{thm:nisan_szegedy_cur}
 Suppose that $Y \subseteq \{ 0, 1\}^n$
 has $\deg(f_Y) = d$ and $f_Y$ depends on the maximum number of coordinates (for that $d$).
 Then $f_Y$ depends on at least $3 \cdot 2^{d-1}-2$ and at most $8.788 \cdot 2^{d-1}$ coordinates.
\end{theorem}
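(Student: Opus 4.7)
The plan is to establish the lower and upper bounds separately. For the lower bound, I would verify by induction on $d$ that the Tarannikov function $H_d$ is Boolean, has (real) multilinear degree exactly $d$, and essentially depends on $v_d := 3 \cdot 2^{d-1} - 2$ variables. The base case $d=2$ is a direct computation: expanding $H_2(x_1, x_2, x_3, x_4)$ via $a \oplus b = a + b - 2ab$ and simplifying yields
\[
 H_2 = x_1 + x_2 - x_1 x_2 - x_1 x_4 - x_2 x_3 + x_3 x_4,
\]
a degree-$2$ polynomial essentially depending on all four variables. For the inductive step, I would write $H_d = x_1 \oplus G$ where $G$ is the Boolean selector returning $H_{d-1}(y)$ if $x_1 = x_2$ and $H_{d-1}(z)$ otherwise, and expand to
\[
 H_d = x_1 + H_{d-1}(y) - x_1 H_{d-1}(y) - x_2 H_{d-1}(y) - x_1 H_{d-1}(z) + x_2 H_{d-1}(z).
\]
The crucial point is that the $x_1 x_2$-terms that could a priori raise the degree to $d+1$ cancel exactly, so $\deg H_d \leq d$; the matching lower bound comes from a surviving degree-$d$ monomial $x_1 \cdot m$ arising from any witnessing degree-$(d-1)$ monomial $m$ of $H_{d-1}$. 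The recursion $v_d = 2 + 2 v_{d-1}$ with $v_2 = 4$ solves to $v_d = 3 \cdot 2^{d-1} - 2$, and essential dependence on all $v_d$ variables follows inductively.

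For the upper bound, the starting point is the Nisan-Szegedy lemma on sparse polynomials: a non-zero multilinear polynomial on $\{0,1\}^n$ of degree at most $d$ has at least $2^{n-d}$ non-zero values. Applied to the discrete derivative $\partial_i f_Y$, which has degree at most $d-1$, this gives $\mathrm{Inf}_i(f_Y) \geq 2^{-d+1}$ for every essential variable $x_i$. Combined with the total-influence bound
\[
 \sum_{i=1}^n \mathrm{Inf}_i(f_Y) = \sum_{S} |S| \, \hat{f}_Y(S)^2 \leq d \cdot \|f_Y\|_2^2 \leq d,
\]
this immediately recovers the $n \leq d \cdot 2^{d-1}$ bound of Theorem~\ref{thm:NS}. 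To shave the factor $d$ and reach Wellens' constant, I would follow the Chiarelli-Hatami-Saks strategy: split the essential variables at an influence threshold $\tau$. The high-influence variables (those with $\mathrm{Inf}_i \geq \tau$) number at most $d/\tau$ by total influence, and the low-influence ones are handled by a random restriction fixing a carefully chosen fraction of coordinates to independent uniform bits. With positive probability the restricted function still has degree $\leq d$ and retains many essential variables, now with boosted influences. Iterating this restriction-and-cleaning procedure yields a recursion solvable in $O(2^d)$; an optimal joint tuning of the threshold, restriction probability, and recursion branching pushes the leading constant down to $8.788$.

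The main obstacle is this last quantitative step. Nisan-Szegedy's factor $d$ loss is essentially free from the sparse-polynomial lemma, but eliminating it demands amortizing losses across multiple random restrictions, and pinning down the explicit constant $8.788$ requires delicate joint optimization of several free parameters. The lower bound, by contrast, reduces to spotting the $x_1 x_2$-cancellation in the inductive step for $H_d$ and then a routine bookkeeping induction.
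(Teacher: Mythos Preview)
Your proposal is correct and follows the same line as the paper: the survey does not prove Theorem~\ref{thm:nisan_szegedy_cur} but simply attributes the lower bound to Tarannikov's construction $H_d$ (restated just before the theorem) and the upper bound to Wellens, and your sketch fleshes out exactly these two ingredients. Your inductive verification that $H_d$ has degree $d$ and depends on $3\cdot 2^{d-1}-2$ variables is accurate (the key identity $H_d = x_1 + (1-x_1-x_2)H_{d-1}(y) + (x_2-x_1)H_{d-1}(z)$ you derive is correct), and your outline of the Chiarelli--Hatami--Saks/Wellens upper bound is a fair high-level description, with the honest caveat that extracting the constant $8.788$ is the delicate part.
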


Lastly, one can ask what happens when a function is close to degree $d$.
This question got answered by Kindler and Safra in \cite{KS2002}.

\subsection{The $p$-biased Hypercube}

The $p$-biased hypercube is a hypercube $\{ 0, 1 \}^n$ with a distribution
where each coordinate is $1$ with probability $p$.
Hence, for $x \in \{ 0, 1 \}^n$, we have a measure $\mu_p$
defined by $\mu_p(x) = p^{\sum x_i} (1-p)^{\sum (1-x_i)}$.
For $p=\frac12$ we have the classical hypercube.
Expanding Boolean function analysis to the $p$-biased setting
has been very fruitful in the last years, but it is outside the scope
of this document.
There are an FKN theorem \cite{Filmus2016,Filmus2021exposition}
and a Kindler-Safra-type theorem are known \cite{DFH2024}.
The utility of the $p$-biased hypercube is that it can give
a good model for more complex structures. For instance,
for $p = \min(\frac{m}{n}, 1-\frac{m}{n})$,
the $p$-biased hypercube and $J(n, m)$ behave similarly.

Recently, Tanaka and Tokushige suggested a $p$-biased measure
for $V(n, q)$ in the context of Erd\H{o}s-Ko-Rado type results
for vector spaces \cite{TT2024}.

\subsection{The General Hamming Scheme}

Consider the Hamming graph $H(n, q)$ with $q > 2$.
A FKN theorem due to Alon, Dinur, Friedgut and Sudakov can be found
in Lemma 2.4 in \cite{ADFS2004}.
One can show that Theorem \ref{thm:nisan_szegedy_cur}
implies that a Boolean degree $d$ function on $H(n, q)$
depends on at most $4.394 \cdot 2^{\lceil \log_2 q \rceil d}$ variables.
Recently, Valyuzhenich did show an upper bound $\frac{dq^{d+1}}{4(q-1)}$
for $q \neq 2^s$ \cite{Valyuzhenich2024}.

% See also \cite{FI2019} and \cite{DFLLV2020} for some remarks on that setting.

\section{The Johnson Scheme}\label{sec:johnson}

A Boolean degree $d$ function
of the hypercube is, restricted to words of weight $m$, also a Boolean
degree $d$ function of the Johnson scheme $J(n, m)$.
Any Boolean function on $J(n, m)$ has degree at most $\min(m, n-m)$.
Thus, one requires at least $\min(m, n-m) > d$ for a classification.
For $d=1$, this has been done many times.
For instance, it is a special case of more general results by Meyerowitz \cite{Meyerowitz1992}.
Short proofs limited to the degree $1$ case
can be found in \cite{Filmus2016,FI2019}.

\begin{theorem}
 Let $m, n-m \geq 2$.
 Suppose that $Y \subseteq \binom{[n]}{m}$
 has $\deg(f_Y) \leq 1$. Then $Y$ or its complement is one of the following:
 \begin{enumerate}
  \item the empty set (that is, $f = 0$),
  \item the set of all $m$-sets which contain a fixed element $i$ (that is, $f = x_i$).
 \end{enumerate}
\end{theorem}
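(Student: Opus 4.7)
The plan is to exploit the affine representation of a degree $\leq 1$ function together with the Boolean constraint $f_Y \in \{0,1\}$. Since $\deg(f_Y) \leq 1$, we can write
\[
f_Y(S) = c + \sum_{i \in S} c_i
\]
for some real constants $c, c_1, \ldots, c_n$ (using that $f_Y$ lies in the span of the constant function and the $x_i(S) = [i \in S]$).

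First I would show that $\{c_1,\ldots,c_n\} \subseteq \{a, a+1\}$ for some real $a$. If $S' = (S \setminus \{j\}) \cup \{k\}$ is a Johnson-graph neighbour of $S$ (with $j \in S$, $k \notin S$), then $f_Y(S') - f_Y(S) = c_k - c_j$, and since $f_Y$ is Boolean this difference lies in $\{-1,0,1\}$. As $m, n-m \geq 1$, every ordered pair $j \neq k$ is realised by such a swap, so $c_k - c_j \in \{-1,0,1\}$ for all $j,k$. This forces the multiset $\{c_i\}$ into an interval of diameter at most $1$, hence into at most two values differing by exactly~$1$ (or a single value).

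Next I would split on $A := \{i : c_i = a+1\}$ and $k := |A|$. Writing $c' := c + ma$, we have $f_Y(S) = c' + |S \cap A|$. If $k \in \{0, n\}$ then $f_Y$ is constant, so $f_Y \equiv 0$ or $f_Y \equiv 1$. If $k = 1$ with $A = \{i\}$, then $f_Y \in \{c', c'+1\}$ forces $c' = 0$ and $f_Y = x_i$; the case $k = n-1$ symmetrically yields $f_Y = 1 - x_i$.

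The main step is to rule out $2 \leq k \leq n-2$. Here the range of $S \mapsto |S \cap A|$ on $\binom{[n]}{m}$ is the integer interval $[\max(0, m - (n-k)), \min(m, k)]$, whose length, in the four sub-cases determined by the orderings of $m$ versus $k$ and of $k$ versus $n-m$, equals $k$, $m$, $n-m$, or $n-k$ respectively. Under $m, n-m \geq 2$ and $2 \leq k \leq n-2$, each of these is $\geq 2$, so $|S \cap A|$ takes at least three distinct values and $f_Y$ cannot be Boolean, contradiction. This elementary range computation is the only place the hypotheses $m, n-m \geq 2$ enter, and is the main (if routine) obstacle; everything else is a direct consequence of the setup.
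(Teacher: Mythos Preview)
Your argument is correct. The key steps---that pairwise differences $c_k - c_j$ lie in $\{-1,0,1\}$ (forcing at most two consecutive values among the $c_i$), and that for $2 \leq k \leq n-2$ the quantity $|S \cap A|$ ranges over an interval of length $\min(m,k,n-m,n-k) \geq 2$---are both sound, and the case analysis is complete.

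Note that the paper, being a survey, does not include its own proof of this theorem; it merely states the result and points to \cite{Meyerowitz1992,Filmus2016,FI2019} for short proofs. Your argument is very much in the spirit of those elementary treatments: the proof in \cite{FI2019}, for instance, also proceeds by writing $f_Y = c + \sum c_i x_i$ and analysing the possible coefficient patterns via local swaps, so your approach essentially matches the referenced literature.
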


Recall that the FKN theorem for the hypercube says that if a Boolean function is $\eps$-close to degree $1$,
then it is $O(\eps)$-close to a Boolean degree $1$ function. For the Johnson scheme,
we cannot hope for such a result. Say, for $m=2$ and $n$ large, $x_1 + x_2$
is close to degree $1$ on the Johnson scheme, while it is not on the hypercube.
Filmus gives an FKN Theorem for the Johnson scheme \cite{Filmus2016}.
Again, we will state the theorem in two formulations.

\begin{theorem}[FKN Theorem for the slice]
  Let $m, n-m \geq 2$ and put $p = \min(m/n, 1 - m/n)$.
  Suppose that $f: \binom{[n]}{m} \rightarrow \{ 0, 1\}$
  is $\eps$-close to $g$
  for some degree $1$ function $g: \binom{[n]}{m} \rightarrow \RR$.
  Then either $f$ or $1-f$ is $O(\eps)$-close to
  \[
   x_{i_1} + \cdots + x_{i_t} ~\text{ or }~ 1-(x_{i_1} + \cdots + x_{i_t})
  \]
  for some set $I = \{ i_1, \ldots, i_t \}$ of size at most $\max(1, O(\sqrt{\eps}/p))$.
\end{theorem}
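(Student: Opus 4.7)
The plan is to adapt the classical Friedgut--Kalai--Naor argument to the slice, exploiting the close relationship between $J(n,m)$ and the $p$-biased hypercube. First, I would replace $g$ by the orthogonal projection $g^{\ast} = (E_0 + E_1)f$, which can only decrease $\|f - g\|_2^2$, and write $g^{\ast} = c_0 + \sum_{i=1}^n c_i x_i$. The relation $\sum_i x_i = m$ makes this representation non-unique, so I normalize by demanding $\sum c_i = 0$; then $c_0 = \mu = \mathbb{E}[f]$. A direct covariance calculation using $\mathrm{Var}(x_i) = p(1-p)$ and $\mathrm{Cov}(x_i, x_j) = -p(1-p)/(n-1)$, together with the normalization, gives
\[
 \mathrm{Var}(g^{\ast}) = \frac{n}{n-1}\, p(1-p) \sum_{i=1}^n c_i^2.
\]
Combined with the identity $\|f - g^{\ast}\|_2^2 = \mu(1-\mu) - \mathrm{Var}(g^{\ast})$ (which holds because $f^2 = f$ and $g^{\ast}$ is the $L^2$-projection), this already bounds $\sum c_i^2$ by roughly $\mu(1-\mu)/(p(1-p))$ with an additive error $O(\eps/p)$.

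The heart of the proof is classifying which coefficient profiles $(c_i)$ are compatible with $g^{\ast}$ being near-Boolean. The FKN-style fourth moment identity on the slice is $\mathbb{E}[(g^{\ast})^2(1 - g^{\ast})^2] = O(\eps)$, which follows from $f(1-f) = 0$ together with a triangle inequality. Expanding the left-hand side gives a quartic expression in the $c_i$, and combining it with an $L^4$--$L^2$ hypercontractive estimate for degree $1$ functions on the slice (as developed by Filmus) should force, after possibly swapping $f \leftrightarrow 1-f$ to reduce to the small-mean case $\mu = O(\sqrt{\eps})$, that the set $I = \{i : c_i \geq 1/2\}$ has size $|I| = O(\sqrt{\eps}/p)$, that $c_i = 1 + O(\eps)$ for $i \in I$, and that $\sum_{i \notin I} c_i^2 = O(\eps/p)$. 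On the hypercube the analogous step places essentially all the mass on a single index; on the slice the mild negative correlations among the $x_i$ contribute $O(1/(n-1))$ corrections that need careful tracking.

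Once $I$ is identified, the probability that two distinct indices of $I$ both lie in a random $m$-set is $|I|(|I|-1) \cdot \frac{m(m-1)}{n(n-1)} = O(t^2 p^2) = O(\eps)$ by the bound on $t = |I|$, so $\sum_{i \in I} x_i$ is $\{0,1\}$-valued outside an $O(\eps)$ fraction of the slice. Chaining the estimates --- that $g^{\ast}$ is $O(\eps)$-close to $\sum_{i \in I} x_i$, that $f$ is $\eps$-close to $g^{\ast}$, and that the $f \mapsto 1 - f$ swap is free --- yields the four alternatives of the statement, with the $\max(1,\cdot)$ in the bound on $|I|$ covering the regime $\sqrt{\eps}/p \leq 1$ where a single index suffices.

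The main obstacle is the second step. The fourth-moment argument on the slice is genuinely more delicate than on the hypercube due to the correlations between the $x_i$, and obtaining the optimal scaling $|I| = O(\sqrt{\eps}/p)$ --- rather than the weaker $O(1/p)$ one would get from the variance bound alone --- requires both the slice hypercontractive inequality and careful tracking of the near-equality cases in the quartic moment inequality. A subtle related point is that when $\mu$ is bounded away from $0$ and $1$, the argument must in fact rule out any degree $1$ approximation with small $\eps$, so the reduction to $\min(\mu, 1-\mu) = O(\sqrt{\eps})$ must be forced by the analysis rather than assumed.
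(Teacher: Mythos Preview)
The paper is a survey and does not give a proof of this theorem; it simply attributes the result to Filmus \cite{Filmus2016} and remarks on tightness. So there is no in-paper proof to compare against.

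As to your sketch itself: the overall architecture --- project onto $V_0+V_1$, bound $\mathbb{E}[(g^\ast)^2(1-g^\ast)^2]$, invoke a hypercontractive/$L^4$ estimate on the slice to isolate a small set $I$ of large coefficients, and then verify that $\sum_{i\in I} x_i$ is almost $\{0,1\}$-valued --- is indeed the line taken in \cite{Filmus2016}. The genuine gap is your ``reduce to the small-mean case $\mu = O(\sqrt{\eps})$''. This reduction is false in general: the dictator $f = x_1$ has $\mu = p$, and when $\sqrt{\eps} < p$ the theorem's conclusion has $|I|=1$ via the $\max(1,\cdot)$, with $\mu$ close to one of $0,\,p,\,1-p,\,1$, not to $O(\sqrt{\eps})$. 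Your closing paragraph compounds the error by asserting that for $\mu$ bounded away from $0$ and $1$ the analysis ``must rule out any degree $1$ approximation with small $\eps$'' --- again $f=x_1$ at $p=1/2$ is a counterexample. What the fourth-moment analysis actually produces is a dichotomy between the regime $\eps \gtrsim p^2$ (where $t$ can grow and $\mu \approx tp = O(\sqrt{\eps})$ as you describe) and the regime $\eps \ll p^2$ (where one recovers a single dictator or its complement, and $\mu$ is near $0$, $p$, $1-p$, or $1$). Getting this split right, rather than forcing a global small-mean reduction, is precisely where the slice argument diverges from the hypercube one and where \cite{Filmus2016} is most careful.
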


It can be checked that the two functions in the statement are $O(\eps)$-close
to being Boolean. Thus, the theorem is tight. Let us also
state a version of the theorem using sets.

\begin{theorem}[FKN Theorem for the Johnson scheme]
 Let $m, n-m \geq 2$ and put $p = \min(m/n, 1 - m/n)$.
 Suppose that $Y \subseteq \binom{[n]}{m}$
 has $\| f_Y - (E_0+E_1) f_Y \|_2^2 \geq 1-\eps$.
 Then there exists an $I \subseteq \{ 1, \ldots, n \}$ with $|I| = \max(1, O(\sqrt{\eps}/p))$
 such that either $| Y \Delta \{ z \in \binom{[n]}{m}: |z \cap I| \geq 1 \} |/\binom{n}{m} = O(\eps)$
 or $| Y \Delta \{ z \in \binom{[n]}{m}: |z \cap I| = 0 \} |/\binom{n}{m} = O(\eps)$.
\end{theorem}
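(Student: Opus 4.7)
The plan is to reduce the set-theoretic statement to the functional FKN theorem for the slice, stated immediately above. By the complementation isomorphism $J(n,m) \cong J(n,n-m)$ we may assume without loss of generality that $p = m/n \leq 1/2$. Interpret the hypothesis as saying that the projection $g := (E_0+E_1) f_Y$ is within $O(\eps)$ of $f_Y$ in normalized squared $\ell_2$-distance; since $g$ is by construction a real-valued degree-$1$ function on $\binom{[n]}{m}$, this places us inside the hypothesis of the functional version with the same $\eps$ (up to an absolute constant).

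Applying that theorem, after possibly replacing $Y$ by its complement, yields an index set $I = \{i_1, \ldots, i_t\} \subseteq [n]$ of size $t \leq \max(1, O(\sqrt{\eps}/p))$ such that $f_Y$ is $O(\eps)$-close in normalized squared $\ell_2$-distance to one of the real-valued functions $h_I := x_{i_1} + \cdots + x_{i_t}$ or $1 - h_I$. The one remaining task is to replace $h_I$ by the characteristic vector of an actual subset of $\binom{[n]}{m}$.

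The natural candidate is $A_I := \{ z \in \binom{[n]}{m} : z \cap I \neq \emptyset \}$, since on $z \in \binom{[n]}{m}$ we have $h_I(z) = |z \cap I|$, which coincides with $f_{A_I}(z)$ precisely when $|z \cap I| \leq 1$. Using $(k-1)^2 \leq k(k-1)$ for $k \geq 2$ and the hypergeometric distribution of $|z \cap I|$ gives
\[
 \| h_I - f_{A_I} \|_2^2 = \sum_{k \geq 2} (k-1)^2 \, \Pr[|z \cap I| = k] \leq \mathbb{E}\bigl[\, |z \cap I|(|z \cap I| - 1)\, \bigr] = \frac{t(t-1)\, m(m-1)}{n(n-1)},
\]
which is $O(t^2 p^2) = O(\eps)$ since $tp = O(\sqrt{\eps})$. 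The triangle inequality in the normalized $\ell_2$-norm combines this with the bound from the functional theorem, and $\|f_Y - f_{A_I}\|_2^2 = |Y \Delta A_I|/\binom{n}{m}$ produces the stated symmetric-difference estimate. The remaining branches (complementing $h_I$ to $1 - h_I$, or complementing $Y$) give closeness to $\{z : z \cap I = \emptyset\}$ or to the complements of the two sets listed.

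The only genuinely delicate point is ensuring that the size bound $t = O(\sqrt{\eps}/p)$ coming out of the functional FKN theorem is exactly what the second-moment estimate needs: one requires $t^2 p^2 = O(\eps)$ with the \emph{same} implicit constant dependence, which is precisely what the functional statement provides. Once the two constants are matched, the rest is routine triangle-inequality bookkeeping and the standard conversion between normalized $\ell_2$-distance of Boolean indicators and symmetric-difference density.
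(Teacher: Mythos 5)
Your proof is correct and matches the paper's intent: the survey presents this statement as a direct reformulation of the preceding functional FKN theorem for the slice, leaving the rounding of $x_{i_1}+\cdots+x_{i_t}$ to the indicator of $\{z : z\cap I \neq \emptyset\}$ as an exercise (``it can be checked that the two functions in the statement are $O(\eps)$-close to being Boolean''), and that check is exactly the hypergeometric second-moment computation you supply, combined with the triangle inequality and the identity $\|f_Y-f_{A_I}\|_2^2=|Y\Delta A_I|/\binom{n}{m}$. Note only that the printed hypothesis $\| f_Y - (E_0+E_1) f_Y \|_2^2 \geq 1-\eps$ is evidently a typo for $\leq \eps$, which you have silently (and correctly) read as intended.
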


For $m=3$, equitable partitions of degree $2$ are classified in \cite{EGGV2022,GG2013}.

Let $\gamma(d)$ denote the maximum number of relevant coordinates for
a Boolean degree $d$ function on the hypercube. The main results of \cite{Filmus2023,FI2019a,FV2023}
show the following.

\begin{theorem}\label{thm:NS_slice}
 There exist a constant $C$ and functions $\xi, n_0: [d] \rightarrow \RR$
 such that the following holds.
 Suppose that $Y \subseteq \binom{[n]}{m}$ with $\deg(f_Y) \leq d$.
 \begin{enumerate}
  \item If $C^d \leq m \leq n - C^d$, then $f_Y$ depends on at most $\gamma(d)$ coordinates.
  \item If $n-m \geq n_0(d)$ and $m \geq 2d$, then $f_Y$ depends on at most $\gamma(d)$ coordinates.
  \item If $n \geq 2m$ and $m \geq 2d$, then $f_Y$ depends on at most $\xi(d)$ coordinates.
 \end{enumerate}
\end{theorem}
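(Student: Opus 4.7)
The plan is to reduce each case to the Nisan-Szegedy theorem on the hypercube (Theorem \ref{thm:NS}) via a \emph{lifting} argument: given a Boolean degree $d$ function $f = f_Y$ on the slice $\binom{[n]}{m}$, construct a Boolean degree $d$ function on $\{0,1\}^n$ whose essential variables contain those of $f$, so that the Nisan-Szegedy bound on the cube transfers to the slice. The starting point is Filmus's harmonic decomposition: every $f \colon \binom{[n]}{m} \to \RR$ of degree $\leq d$ has a unique representation $f = \sum_{k \leq d} P_k$, where each $P_k$ is a harmonic multilinear polynomial of degree $k$. The polynomial $P = \sum_k P_k$ can be evaluated on the whole cube $\{0,1\}^n$, yielding a real-valued function $\tilde{P}$ of degree $\leq d$; but $\tilde{P}$ need not be Boolean away from the original slice.

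For part (1), the symmetric range $C^d \leq m \leq n - C^d$, I would prove that $\tilde{P}$ is in fact globally Boolean. Evaluating $\tilde{P}$ on nearby slices produces rational values with controlled denominators, and a Markov--Bernstein-type estimate bounds how far a degree-$d$ polynomial can move as the Hamming weight shifts by $1$. Once both $m$ and $n - m$ exceed $C^d$ for a suitably chosen constant $C$, any deviation of $\tilde{P}$ from $\{0,1\}$ is pushed inside $(-\tfrac12, \tfrac12)$, and the arithmetic constraint then forces $\tilde{P}$ into $\{0,1\}$ pointwise. Theorem \ref{thm:NS} applied to $\tilde{P}$ yields the junta bound $\gamma(d)$.

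Part (2) relaxes the symmetry to $n - m \geq n_0(d)$ with $m \geq 2d$. Here $\tilde{P}$ may fail to be Boolean on low-weight slices, but one can still extract a Boolean lift inductively: fixing a single essential coordinate $i$ and restricting to $\{x_i = 0\}$ and $\{x_i = 1\}$ produces Boolean degree $\leq d$ functions on $J(n-1, m)$ and $J(n-1, m-1)$ with degrees not collapsing (since $m \geq 2d$), and the slack $n - m \geq n_0(d)$ ensures that the harmonic-extension argument from part (1) still applies to one of the two restrictions. Part (3) is the weakest setting, $n \geq 2m$ with $m \geq 2d$, where the lift may fail altogether; there pure combinatorial restriction combined with the degree-$(d{-}1)$ case only gives the weaker bound $\xi(d)$.

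The main obstacle is part (1): proving that $P$ is globally Boolean. The crux is simultaneously exploiting the polynomial rigidity of $\tilde{P}$ and the fact that $\tilde{P}$ takes rational values with bounded denominator on $\{0,1\}^n$; the exponential threshold $C^d$ reflects the deterioration of Markov--Bernstein constants with the degree, and any improvement of the constant $C$ would directly translate into improving $n_0(d)$ in part (2).
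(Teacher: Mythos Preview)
The paper is a survey and contains no proof of this theorem; it simply attributes the three parts to \cite{Filmus2023,FI2019a,FV2023}, so there is no in-text argument to compare your proposal against beyond those citations.

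Your plan for part~(1) is broadly aligned with the literature: the harmonic multilinear extension $\tilde P$ is the right lift, and the goal is indeed to show that $\tilde P$ is Boolean on all of $\{0,1\}^n$ so that Theorem~\ref{thm:NS} applies verbatim and yields $\gamma(d)$. The step you label ``rational values with controlled denominators on nearby slices'' needs to be made precise, however; it is not obvious that the denominators of $\tilde P$ on slice $m\pm 1$ are small enough to be combined with a Markov--Bernstein estimate of strength only $O(C^{-d})$. The published arguments instead exploit the explicit recursion relating the values of a harmonic degree-$d$ polynomial on adjacent slices, and you should either derive that recursion or supply an explicit denominator bound.

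Your sketch for part~(2) has a genuine gap. You propose to restrict a coordinate and then invoke the harmonic-extension argument of part~(1) on the restriction, but setting $x_i=0$ lands in $J(n-1,m)$ with the \emph{same} $m$; in the regime $2d \le m < C^d$, which is the entire content of part~(2), no sequence of such restrictions ever reaches the range $m \ge C^d$ where part~(1) applies, and setting $x_i=1$ only lowers $m$. The proof in \cite{Filmus2023} is not a reduction to part~(1) but a separate, rather delicate induction on $d$ that exploits the threshold $m\ge 2d$ directly. Your one-line description of part~(3) is likewise a placeholder rather than an argument; the restriction-and-induction scheme of \cite{FI2019a} requires quantitative bookkeeping at each step that you have not indicated.
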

It is also shown in \cite{Filmus2023} that for every positive integer $m$, where $d \leq m \leq 2d-1$,
and any positive integer $\ell$, there exists a
$n \geq 2m$ and a $Y \subseteq \binom{[n]}{m}$
such that $f_Y$ depends on $\ell$ coordinates. To see this, take some integer $e$ such that
$\ell \leq de$. Consider the function $g$ defined by
\[
 g = \sum_{i=1}^e \prod_{j=1}^d x_{(d-1)i+j}.
\]
It is clear that $\deg(g) \leq d$
and that it depends on at least $de$ coordinates for $n \geq 2de$.
The condition $m \leq 2d-1$ guarantees that $g$ is Boolean.
Written differently, we have $g = f_Y$ for
\[
 Y = \bigcup_{i=1}^e \left\{ y \in \binom{[n]}{m}: \{ (d-1)i+1, \ldots, (d-1)i+d \} \subseteq y \right\}.
\]
In Theorem \ref{thm:NS_slice}(2), $\gamma(d)$ and $\xi(d)$ are not
the same. We have $\gamma(2) \leq 4$ by the Nisan-Szegedy theorem (and, indeed, equality),
but \cite[\S3]{DBDIM2023} gives an examples for $(n,m) = (8,4)$ which depends
on $5$ coordinates, so $\xi(2) \geq 5 > 4 = \gamma(2)$:
identify $J(8, 4)$ with the elements of $\{0,1\}^8$ with Hamming weight $4$
and take all elements which start with one of
\begin{align*}
  & 11000, 01100, 00110, 00011, 10001,\\
  & 11100, 01110, 00111, 10011, 11001.
\end{align*}
This construction generalizes to $J(2m, m)$ for all $m \geq 4$, see Construction 3 in \cite{Vorobev2020}.
A variant of the Kindler-Safra theorem for $J(n, m)$ due to Keller and Klein can be found in \cite{KK2020}.

In \cite{KMW2024} it is shown by Kiermaier, Mannaert and Wassermann that
\begin{align*}
 \mathrm{gcd}\left( \binom{n}{m}, \binom{n-1}{m-1}, \ldots, \binom{n-d}{m-d} \right)
\end{align*}
divides $|Y|$.

The spectral analysis on the Johnson scheme $J(n, m)$ is much helped
by the existence of a ``nice'' orthogonal basis. This basis has been
described by Srinivasan in \cite{Srinivasan2011} as well as by Filmus in \cite{Filmus2016a}.
See also \cite{FL2020}.

\section{The Grassmann Scheme}

In the Grassmann scheme $J_q(n, m)$,
Boolean degree $1$ functions are traditionally considered in the
projective space $\PG(n-1, q)$ and are called {\it Cameron-Liebler sets}
(or {\it Cameron-Liebler $(m-1)$-space classes} and {\it Cameron-Liebler line classes} for $m=2$).
Considering the very easy classification of Boolean degree $1$ functions
on the hypercube and in the Johnson scheme $J(n, m)$, it is tempting
to assume that for $n \geq 2m \geq 4$, the situation in the Grassmann
scheme is similar. That is, for $V = V(n, q)$, if $Y \subseteq \gauss{V}{m}_q$
with $\deg(f_Y) \leq 1$, then $Y$ is one of the obvious, say, {\it trivial examples}:
\begin{enumerate}[(I)]
 \item The empty set.
 \item All $m$-spaces through a fixed $1$-space $P$.
 \item All $m$-spaces in a fixed hyperplane $H$ of $V$.
 \item The union of the previous two examples when $P \nsubseteq H$.
 \item The complement of any of the examples (I) to (IV).
\end{enumerate}
% Call these example \textit{trivial}.
% Note the difference to $J(n, m)$
% where (III) is the complement of (II), and (IV) is the complement of (I):
% taking the dual in vector spaces
% is not the same as taking the complement in $V$,
% while both operations preserve the degree of a function.

Surprisingly, the classification question is one of the oldest ones in this survey
as it goes back to work by Cameron and Liebler from 1982 \cite{CL1982}.
Consider a subgroup $G$ of $\mathrm{P\Gamma{}L}(n, q)$ acting on the $1$-spaces
and $2$-spaces of $V(n, q)$. Then Lemma \ref{lem:block}
shows that $G$ has at least as many orbits on $2$-spaces as on $1$-spaces.
Examples (I)--(V) arise as orbits of such groups $G$.
Are there any more examples which arise as orbits in this way?
In 2008, Bamberg and Penttila did confirm the conjecture,
that is, there are no additional examples for such groups \cite{BP2008}.

Cameron and Liebler also relaxed their group theoretic
question about $G$ to a combinatorial question, limited to $J_q(4, 2)$.
Here one asks, in the language of this survey, about the Boolean degree $1$ functions
of $J_q(4, 2)$. They suggested that the list (I)--(V) is complete.
This turned out to be false. We will survey various constructions
in the next section.

A relevant portion of the literature on $J_q(4, 2)$ uses
projective notation and the Klein correspondence between
buildings of type $A_3$ and $D_3$. We refer the reader to Section 4.6 of
\cite{BeutelspacherRosenbaum1998} for a geometric point of view of the Klein correspondence
and to Chapter 12 of \cite{Taylor91} for an algebraic one.
The recent survey by Gavrilyuk and Zinoviev on completely regular
codes in $J(n, m)$ and $J_q(n, m)$ covers the degree $1$ case \cite{GZ2025}.

\subsection{Exceptional Degree $1$ Examples}\label{sec:exc}

Only in $J_q(4, 2)$ non-trivial example for
Boolean degree $1$ function/Cameron-Liebler sets are known.
As all the descriptions are in terms of projective geometry,
we will call $1$-spaces \textit{points} and $2$-spaces \textit{lines}.
There are $\gauss{4}{2} = (q^2+1)(q^2+q+1)$ lines.
Examples necessarily have size $x(q^2+q+1)$ for
some integer $x$, where $1 \leq x \leq q^2+1$, see Equation \eqref{eq:div_grass} in \S\ref{sec:VSdiv}.
By taking complements, we can always guarantee
$x \leq \frac{q^2+1}{2}$. There are no established
naming conventions for the various families, so we make some up
for the convenience of the reader.\footnote{%
If a construction can be clearly attributed to three or fewer authors,
then we name them after them. Otherwise, we tried to pick a descriptive name.
}
The group in the table is a known subgroup
of the the automorphism group of the example.
Usually, some modularity condition on $q$ is necessary.

{\medskip\footnotesize\noindent
\setlength{\tabcolsep}{5pt}
\begin{tabular}{@{\,}cc@{~~~}l@{~~}l@{~~}l}
$q$ & $x$  & group & name & ref \\
\hline
odd & $\frac{q^2+1}{2}$ & $O^-(4, q)$ & Bruen-Drudge & \S\ref{sec:bd_fam} \\
odd & $\frac{q^2+1}{2}$ & pt.-stab. in $O^-(4, q)$ & derived Bruen-Drudge & \S\ref{sec:derived_bd_fam} \\
odd & $\frac{q^2+1}{2}$ & $q^2(q+1)$ & derived Bruen-Drudge & \S\ref{sec:derived_bd_fam} \\
$1 \pmod{4}$ & $\frac{q^2+1}{2}$ & $PGL(2, q)$ & Cossidente-Pavese & \S\ref{sec:cp_fam} \\
$4$ & $7$ & stab.~of cone over hyperoval & Govaerts-Penttila & \S\ref{sec:gp_fam} \\
$5$ & $10$ & stab. of a sp.~cap & Gavrilyuk-Metsch & \S\ref{sec:gm_fam} \\
$5, 9 \pmod{12}$ & $\frac{q^2-1}{2}$ & $(C(q^2+q+1) \rtimes C(\frac{q-1}4))\rtimes C(3)$ & Affine I
& \S\ref{sec:r_one_fam} \\
$2 \pmod{3}$ & $\frac{(q+1)^2}{3}$ & $C(q^2+q+1) \rtimes C(3)$ & Affine II & \S\ref{sec:r_two_fam} \\
$27$ & $336$ & $(C(q^2+q+1) \rtimes C(2))\rtimes C(9)$ & Rodgers' Sporadic I & \S\ref{sec:r_spo_fam} \\
$32$ & $495$ & $C(q^2+q+1) \rtimes C(15)$ & Rodgers' Sporadic II & \S\ref{sec:r_spo_fam}
\end{tabular}\par}

\subsubsection{The Bruen-Drudge Family}\label{sec:bd_fam}

This family requires $q$ odd and has $x = \frac{q^2+1}{2}$.
It has been described by Drudge for $q=3$ \cite{Drudge1998}
and later in general by Bruen and Drudge \cite{BD1999}.
The stabilizer of the example is the finite simple group of type $O^+(4, q)$.

The example has a concise description: Recall that the finite
field $\GF(q)$ of odd order $q$ has $(q-1)/2$ non-zero squares
and $(q-1)/2$ nonsquares. Let $Q(x) = \xi x_1^2 + x_2^2 + x_3^2 + x^4$
where $\xi$ is some non-square. Then the discriminant $\disc(Q)$
of $Q$ equals $\xi$. It can be checked that there are $q^2+1$ points $\< x \>$
with $Q(x) = 0$, $q(q^2+1)/2$ with $Q(x)$ a non-zero square,
and $q(q^2+1)/2$ with $Q(x)$ a non-square.
For each subspace $S$, we can consider
$\disc_{|S}(Q)$, the discriminant of $Q$ restricted to $S$.
Then we have the following census of lines:
\begin{enumerate}[(i)]
 \item There $\binom{q^2+1}{2}$ lines $L$ with $\disc_{|L}(Q)$ a non-zero square.
 \item There are $\binom{q^2+1}{2}$ lines $L$ with $\disc_{|L}(Q)$ a non-square.
 \item There are $\frac{(q^2+1)(q+1)}{2}$ lines with $\disc_{|L}(Q) = 0$
       and $Q(x)$ either $0$ or a non-zero square for all points $\<x\>$ on $L$.
 \item There are $\frac{(q^2+1)(q+1)}{2}$ lines with $\disc_{|L}(Q) = 0$
       and $Q(x)$ either $0$ or a non-square for all points $\<x\>$ on $L$.
\end{enumerate}
Any union of size $\frac{q^2+1}{2} (q^2+q+1)$ gives an example, that is,
(i) and (iii), (i) and (iv), (ii) and (iii), (ii) and (iv).

\subsubsection{Derived Bruen-Drudge Families}\label{sec:derived_bd_fam}

There are several more families with $x = \frac{q^2+1}{2}$
which are obtained by modifying the Bruen-Drudge family locally.
All of them require $q$ odd.
One family has been described by Cossidente and Pavese \cite{CP2017}
and, independently, by Gavrilyuk, Matkin and Penttila \cite{GMP2016}
admitting the stabilizer of a point in $O^-(4, q)$,
one other family by Cossidente and Pavese \cite{CP2018}
admitting a subgroup of order $q^2(q+1)$.
Furthermore, Remark 3.16 in \cite{CP2018} notes that
there are $1, 3, 5$ additional examples for $q=7, 9, 11$, respectively.

\subsubsection{The Cossidente-Pavese Family}\label{sec:cp_fam}

One last family with $x = \frac{q^2+1}{2}$.
It requires $q \equiv 1 \pmod{4}$, admits $PGL(2, q)$ as a stabilizer,
and has been described by Cossidente and Pavese \cite{CP2019}.
Francesco Pavese also informed the author that there are
sporadic examples for $q=13, 25, 41$ known which admit
$C_{(q-1)/2} \times PGL(2, q)$ as group.

\subsubsection{The Govaerts-Penttila Example}\label{sec:gp_fam}

This example requires $q = 4$ and has $x = 7$.
It has been described by Govaerts and Penttila \cite{GP2005}.
This is the only known example for which $q$ is an even power of $2$.

It has a very concise description: Fix a projective plane $\pi$
(that is, a $3$-space). In $\pi$, take a hyperoval $\scrH$, that is,
a set of $q+2$ points with no three collinear. Pick a point $P$
not in $\pi$. Let $\scrC$ denote the cone with vertex $P$ and base $\scrH$.
Then the set consists of the following is an Boolean degree $1$ function with $x=7$:
\begin{enumerate}[(i)]
 \item The $q+2 = 6$ lines in $\scrC$.
 \item The $\frac{(q+2)(q-1) \cdot (q+1)(q-1)}{2} = 135$ lines which meet $\scrC$
        in precisely two points and are disjoint from $\scrH$.
 \item The $\frac{q(q-1)}{2} = 6$ lines in $V(4, q)$ which are disjoint from $\scrH$.
\end{enumerate}

\subsubsection{The Gavrilyuk-Metsch Example}\label{sec:gm_fam}

This example requires $q = 5$ and has $x = 10$.
It is described by Gavrilyuk and Metsch \cite{GMe2014}.
It uses a cap of $20$ points which has been constructed in \cite{AKL1996}.
For $q=5$, this is the unique example with $x = 10$.

\subsubsection{Affine Family I} \label{sec:r_one_fam}

This family requires $q \equiv 5 \pmod{12}$ or $q \equiv 9 \pmod{12}$
and has $x = \frac{q^2-1}{2}$. Small examples for this family have been
first discovered by Rodgers in his PhD thesis \cite{PhDRodgers}. Descriptions
of the infinite family are due to De Beule, Demeyer, Metsch and Rodgers \cite{DeBeule2016}
as well as Feng, Momihara and Xiang \cite{FMX2015}.

We call a family {\it affine} if it has the property
that all their elements are not contained in a fixed (hyper)plane $\pi$.
In particular, for $x = \frac{q^2-1}{2}$, we can add all the
lines in $\pi$ to obtain a family with $x = \frac{q^2+1}{2}$.
This also makes them non-trivial examples for
Cameron-Liebler sets in affine spaces, see \cite{DIMS2021,DMSS2020} and \S\ref{sec:bilin}.

For $q \equiv 9 \pmod{12}$, the family belongs to an affine two-intersection set
with parameters $(\frac12 (3^{2e} - 3^e), \frac12 (3^{2e} + 3^e))$.
That is, there exists a set of points $\scrP$ not in $\pi$
such that any line not in $\pi$ intersects $\scrP$ in one of these two numbers.
The one type of lines corresponds to the lines in the family.

\subsubsection{Affine Family II} \label{sec:r_two_fam}

This family requires $q \equiv 2 \pmod{3}$ and has $x = \frac{(q+1)^2}{3}$.
Small examples for this family have been
first discovered by Rodgers \cite{PhDRodgers}.
The infinite family has been described \cite{FMRXZ2021}
by Feng, Momihara, Rodgers, Xiang and Zou.
The family includes the only infinite family for $q$
an odd power of $2$.

\subsubsection{Rodgers' Sporadic Affine Examples} \label{sec:r_spo_fam}

In \S4.3 of \cite{PhDRodgers}, Rodgers describes
one sporadic example with $(q, x) = (27, 336)$
and two sporadic examples with $(q, x) = (32, 495)$.

\subsection{Classification Results for Degree $1$}

We have seen that in $J_q(4, 2)$ a classification of Boolean degree $1$ functions
is hopeless. The general question of the classification of Boolean degree $1$ function for $J_q(n, m)$
is surely (at least for $m=2$) implicit in the work by Cameron and Liebler from 1982.
The case for general $n$ and $m=2$ has been intensively investigated
by Drudge in his PhD thesis \cite{Drudge1998} in 1998, while the case for general $m$
found a more explicit treatment in the late 2010s \cite{BDBD2019,FI2019,RSV2018}.

The crucial ingredient for several of the classification results
is the use of design-orthogonality together with
the following weighted design in $J_q(4, 2)$ due to Gavrilyuk and Mogilnykh \cite{GM2014}.
For a point $P$ and a hyperplane $H$, define $g_{P,H}: \gauss{V(4, q)}{2} \rightarrow \{ -(q-1), 0, 1\}$ by
\begin{align*}
 g_{P,H}(L) = \begin{cases}
            1 & \text{ if } P \subseteq L \nsubseteq H,\\
            1 & \text{ if } P \nsubseteq L \subseteq H,\\
            -(q-1) & \text{ if } P \subseteq L \subseteq H,\\
            0 & \text{ otherwise.}
           \end{cases}
\end{align*}
One verifies that $g_{P,H}^T {\bf j} = q^2+1$.
A line $L$ contains $q+1$ points $P_1, \ldots, P_{q+1}$
and lies on $q+1$ planes $H_1, \ldots, H_{q+1}$.
By Equation \eqref{eq:designorth}, $f_Y^T g_{P_i,H_j} = x$ for a Boolean degree $1$ function $f_Y$
of size $x(q^2+q+1)$. Hence,
\begin{align*}
 x = & |\{ L \in Y: P \subseteq L \}| + |\{ L \in Y: L \subseteq H \}| \\ &- (q+1) |\{ L \in Y: P \subseteq L \subseteq H\}|.
\end{align*}
For a fixed line $L$, let $t_{ij}$ denote the number of elements of $Y$ in $P_i$
and $H_j$ (distinct from $L$).
Together with the fact that $\{ Y, X \setminus Y\}$ is an equitable bipartition,
the possible values of $t_{ij}$ (called pattern) are very restricted, see \cite{GMe2014}.

The combined work by Drudge \cite{Drudge1998},
by Gavrilyuk, Matkin and Mogilnykh \cite{GM2018,GM2014,Matkin2018}, and by Filmus and the author \cite{FI2019}
gives a complete classification result for small $q$:

\begin{theorem}\label{thm:class_smallq}
 Let $q \in \{ 2, 3, 4, 5\}$.
 Suppose that $Y$ is a family of $m$-spaces of $V(n, q)$
 with $\deg(f_Y) \leq 1$.
 If $Y$ is not trivial, then $(n,m) = (4, 2)$.
\end{theorem}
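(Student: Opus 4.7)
The plan is to induct on $m$ so as to reduce to the case $m = 2$, and then to handle $m = 2$, $n \geq 5$ by exploiting design-orthogonality against the Gavrilyuk-Mogilnykh weighted design $g_{P,H}$.

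For the inductive step from $m \geq 3$ down to $m = 2$, given $Y \subseteq \binom{V}{m}$ with $\deg(f_Y) \leq 1$ and any fixed $1$-space $P \leq V$, I would form the derived set $Y_P := \{U \in Y : P \subseteq U\}$, identified via $U \mapsto U/P$ with a subset of $\binom{V/P}{m-1}$ in $V/P \cong V(n-1,q)$. Using the poset-theoretic description of degree via the monomials $x_Q$, one checks that $f_{Y_P}$ has degree $\leq 1$ in $J_q(n-1, m-1)$: if $f_Y = c_0 + \sum_Q c_Q x_Q$, restricting to $U \supseteq P$ and grouping the $Q$'s that share the same image in $V/P$ produces an analogous degree $\leq 1$ expansion for $f_{Y_P}$. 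Since $n \geq 2m$ forces $(n-1, m-1) \neq (4, 2)$ whenever $m \geq 3$, the inductive hypothesis applies and $Y_P$ must be trivial for every $P$. A reconstruction argument (inclusion--exclusion across $P$'s, or exploiting the equitable-partition structure of $\{Y, X \setminus Y\}$) then lifts the triviality of every $Y_P$ back to $Y$.

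For the base case $m = 2$, $n \geq 5$, I would apply design-orthogonality \eqref{eq:designorth} to $f_Y$ against $g_{P,H}$, obtaining for every incident point--hyperplane pair $(P, H)$ the local identity
\[
  x = a_P + a_H - (q+1)\, a_{P,H},
\]
where $x = |Y|/\gauss{n-1}{1}$ and $a_P$, $a_H$, $a_{P,H}$ count the lines of $Y$ through $P$, contained in $H$, and incident to both, respectively. Specializing this to all incident flags $(P_i, H_j)$ supporting a fixed line $L$ yields the Gavrilyuk-Metsch pattern constraints on the tuple $(t_{ij})$; for $q \in \{2,3,4,5\}$ the admissible patterns form a finite list, enumerated in \cite{GM2014,Matkin2018,GMe2014}. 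A global double-count over all lines of $V$, exploiting the hyperplane directions present only when $n \geq 5$, then forces $x \in \{0, \gauss{n-1}{1}\}$, so that $Y$ is trivial.

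The main obstacle will be the global consolidation in this base case. The local enumeration of admissible patterns is essentially mechanical, but welding the per-line constraints into a contradiction for every non-trivial $x$ is where the $q \leq 5$ hypothesis becomes indispensable, and where the literature cited above descends into genuinely $q$-by-$q$ casework. By comparison, the inductive reduction from $m \geq 3$ to $m = 2$ is routine once compatibility of the shortening operation with the cometric filtration has been verified.
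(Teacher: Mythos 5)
Your high-level skeleton (reduce $m \geq 3$ to $m = 2$ by quotients, then settle $m = 2$, $n \geq 5$) matches the shape of the argument in \cite{FI2019}, and the observation that quotienting by a point $P$ preserves $\deg \leq 1$ is correct and is indeed part of how the reduction is carried out there. The genuine gap is in your base case. The literature behind Theorem \ref{thm:class_smallq} does \emph{not} prove triviality for $m=2$, $n \geq 5$ by a direct double count over admissible patterns; it first produces a \emph{complete classification of all Boolean degree $1$ functions in $J_q(4,2)$}, including the non-trivial ones of \S\ref{sec:exc} (Drudge for $q \leq 3$, Govaerts--Penttila and Gavrilyuk--Mogilnykh for $q=4$, Gavrilyuk--Matkin and Matkin for $q=5$, the latter two heavily computer-assisted), and only then shows that none of these non-trivial examples extends to $J_q(5,2)$. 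This two-step structure is unavoidable in your framework as well: the pattern constraints coming from $g_{P,H}$ and the modular condition of Theorem \ref{thm:xlines} are \emph{satisfied} by the Bruen--Drudge and the other exceptional examples in $J_q(4,2)$, so an argument in $J_q(5,2)$ that only aggregates such local, per-line constraints cannot by itself distinguish $n=5$ from $n=4$; it must use information about what the restriction of $Y$ to each hyperplane (a $J_q(4,2)$ instance) can possibly look like. Your sentence ``a global double-count over all lines \dots then forces $x \in \{0, \gauss{n-1}{1}\}$'' is therefore exactly the missing content, not a routine consolidation: for general $q$ that statement is open (which is why Theorem \ref{thm:class_asymp} and the various size bounds are stated only as partial results), and for $q \in \{4,5\}$ the only known route passes through the exhaustive $J_q(4,2)$ classification followed by a non-extension analysis exploiting the explicit sizes and structure of the exceptional examples.

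A second, smaller issue: in the inductive step, lifting ``every $Y_P$ is trivial'' back to ``$Y$ is trivial'' is not automatic (a single quotient direction does not determine $Y$), and in \cite{FI2019} this consolidation is a separate argument combining point-quotients with restrictions to hyperplanes. That part is at least of the right order of difficulty; the base case is where the proposal does not yet constitute a proof.
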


This result is based on a complete classification of the non-trivial Boolean degree $1$ functions
in $J_q(4, 2)$ and then showing that these do not extend to $J_q(5, 2)$.
Recently, the author showed in \cite{Ihringer2024} that for $|n-2m|$ sufficiently large,
the same classification results holds.

\begin{theorem}\label{thm:class_asymp}
 Let $\min(n-m, m) \geq 2$. Then there exists a function $c(q)$
 such that the following holds:
 Suppose that $Y$ is a family of $m$-spaces of $V(n, q)$
 with $\deg(f_Y) \leq 1$ and $|n-2m| \geq c(q)$.
 Then $Y$ is trivial.
\end{theorem}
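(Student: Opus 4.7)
The plan is to exploit the asymptotic rigidity of $J_q(n,m)$ when $n-2m$ is large. By Grassmann duality $J_q(n,m) \cong J_q(n,n-m)$ we may assume $n \geq 2m$, so that the hypothesis becomes $n - 2m \geq c(q)$. Following the graded-poset description of degree-$1$ functions, write $f_Y = c_0 + \sum_P c_P\, x_P$ as a linear polynomial in the point indicators $x_P$ and introduce the \emph{point function} $h \colon \gauss{V}{1} \to \mathbb{Z}_{\geq 0}$, $h(P) = |\{S \in Y : P \subseteq S\}|$, which is a linear image of $f_Y$. A direct check shows that on each trivial example (I)--(V) the function $h$ takes at most two distinct values; the aim is to prove that this dichotomy is forced once $n - 2m \geq c(q)$.

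I would proceed by induction on $m$, treating $m=2$ as the base case. For $m=2$ I apply design-orthogonality against the Gavrilyuk-Mogilnykh weighted design $g_{P,H}$. For every line $L$ of $V$ and every $4$-space $W$ containing $L$, the restriction of $Y$ to $\gauss{W}{2}$ sits inside $J_q(4,2)$, and equation~\eqref{eq:designorth} yields the local identity displayed just before Theorem~\ref{thm:class_smallq}. The pattern analysis of \cite{GMe2014} then forces the local pattern $t_{ij}$ at $L$ to lie in a finite list whose length depends only on $q$. Combining this with the Delsarte LP bound \eqref{eq:delsarte_lp}---which, because $\deg(f_Y) \leq 1$ loads only $V_0$ and $V_1$, reduces to a short system---yields quantitative estimates on the multiplicities of each local pattern across $V$. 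When $n - 2m$ is large relative to $q$, these multiplicity constraints rule out every pattern distribution except those coming from the trivial examples.

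For general $m$ I would use the link construction: for each point $P$, the set $Y/P := \{S/P : S \in Y,\ P \subseteq S\}$ is a Boolean degree-$1$ function on $J_q(n-1,m-1)$, and for each hyperplane $H$ the section $Y \cap \gauss{H}{m}$ is Boolean degree-$1$ on $J_q(n-1,m)$. Once $n-2m \geq c(q)$ is large enough, both inductive hypotheses apply and classify these auxiliary sets as trivial. The gluing step---checking that simultaneously trivial restrictions in every point-link and every hyperplane-section can only arise from a globally trivial $Y$---is the main obstacle I anticipate. The bulk of the technical work should lie here: one must show that the local pattern data around each $m$-space is consistent only with $Y$ being a star, a hyperplane pencil, or one of the unions in (IV)--(V), and that the divisibility and integrality constraints on $|Y|$ coming from design-orthogonality with $m$-spreads (or their near-spread substitutes when $m \nmid n$, which still span large portions of $V_0 + V_2 + \cdots + V_m$) exclude any intermediate size such as the exceptional $x = \tfrac{q^2+1}{2}$ from Section~\ref{sec:exc} as soon as $n-2m$ exceeds a $q$-dependent threshold.
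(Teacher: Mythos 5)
Your proposal departs entirely from the paper's actual route: Theorem~\ref{thm:class_asymp} is proved in \cite{Ihringer2024} via vector space Ramsey theory (the Graham--Leeb--Rothschild theorem \cite{GLR1972}, with repeated applications of Hales--Jewett), which is exactly why the survey stresses that the resulting $c(q)$ is enormous. Your induction-on-$m$ scheme contains a genuine gap precisely where the difficulty lives, namely the base case $m=2$. Design-orthogonality against $g_{P,H}$ and the pattern analysis of \cite{GMe2014} are known to yield only \emph{size restrictions}: the modular condition of Theorem~\ref{thm:xlines}(3) and lower bounds of the shape $x > \tfrac18 q^{n-3m+2}$. Since the trivial upper bound is $x \lesssim q^{n-2}$, these constraints never close the gap for $m=2$ and general $q$, however large $n-2m$ is; had this machinery sufficed, the line-class case would have been settled by these tools long before 2024, whereas the complete classification is in fact only known for $q \leq 5$ (Theorem~\ref{thm:class_smallq}). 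The Delsarte LP bound contributes nothing further here: for a degree-$1$ function, $f_Y \in V_0 + V_1$ is the hypothesis, not an exploitable constraint, so \eqref{eq:delsarte_lp} is vacuously satisfied. Your sentence ``these multiplicity constraints rule out every pattern distribution except those coming from the trivial examples'' is the entire theorem restated, with no mechanism supplied.

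The inductive step is likewise incomplete. The reductions to point-links $Y/P$ on $J_q(n-1,m-1)$ and hyperplane sections on $J_q(n-1,m)$ are standard and legitimate (they appear in \cite{FI2019,Ihringer2024}), but you yourself flag the gluing step as ``the main obstacle'' and do not carry it out; moreover, iterating the descent terminates at the unproven $m=2$ base case, so nothing is gained. The spread-based divisibility argument you invoke at the end only constrains $|Y|$ modulo $\gauss{n-1}{m-1}$ and cannot exclude exceptional values of $x$ on its own. By contrast, the published proof sidesteps all local pattern analysis: a non-trivial degree-$1$ function induces a colouring of subspaces, and once $|n-2m|$ exceeds a vector space Ramsey number one extracts a structured subconfiguration that is incompatible with $\deg(f_Y)\leq 1$ unless $Y$ is trivial. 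As it stands, the proposal is not a proof.
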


The constant $c(q)$ depends on vector space Ramsey numbers (which exist, see \cite{GLR1972}).
For most parameters these implicitly depends on the repeated application of the
Hales-Jewett theorem. Thus, the $c(q)$ given by the proof is very large.
By Theorem \ref{thm:class_smallq}, we know $c(2) = 0$ and $c(3) = c(4) = c(5) = 1$.
For small $n$, there is a plenitude of results which restrict the
possible sizes of non-trivial examples.

Let us now summarize what is known for small $n$.
The best investigated case is $J_q(4, 2)$.
Recall that here any $Y$ with $\deg(f_Y) \leq 1$ has $|Y| = x(q^2+q+1)$ for some
integer $x$. If $Y$ is one of the trivial examples, then
$x \in \{ 0, 1, 2, q^2-1, q^2, q^2+1\}$
and we can without loss of generality assume that $x \leq \frac{q^2+1}{2}$
(as we can always consider the complement of $Y$).
Here results by Metsch \cite{Metsch2010,Metsch2014} and Metsch and Gavrilyuk \cite{GMe2014}
show the that non-trivial examples are very restricted in terms of their
possible sizes. We summarize these in the following statement.
\begin{theorem}\label{thm:xlines}
 Suppose that $Y$ is a family of $2$-spaces of $V(4, q)$
 of size $x(q^2+q+1)$ with $\deg(f_Y) \leq 1$.
 If $2 < x$, then the following holds:
 \begin{enumerate}
  \item We have $x \geq q+1$.
  \item We have $x > q \sqrt[3]{q/2} - \frac23 q$.
  \item The equation
  \begin{align*}
   \binom{x}{2} + \ell(\ell-x) \equiv 0 \pmod{q+1}
  \end{align*}
  has an integer solution in $\ell$.
 \end{enumerate}
\end{theorem}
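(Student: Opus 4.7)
The plan is to wring restrictive identities out of the weighted design $g_{P,H}$ and combine them with non-negativity and integrality of the local counts to extract arithmetic and inequality constraints on $x$. Fix a Boolean degree $1$ function $f_Y$, and for every flag $(P,H)$ in $\PG(3,q)$ (a point incident with a plane) let $n_P$, $m_H$, and $s_{P,H}$ denote the number of lines of $Y$ through $P$, in $H$, and in the pencil through $P$ in $H$, respectively. Applying $f_Y^T g_{P,H} = x$ and unifying the in-$Y$ and not-in-$Y$ cases for a line $L$ in the pencil collapses to the single identity
\[
 n_P + m_H = x + (q+1)\, s_{P,H},
\]
valid on every flag. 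This is the engine for everything that follows.

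For part (3), reduce the identity modulo $q+1$: $n_P + m_H \equiv x \pmod{q+1}$ on every flag. Two points in a common plane share the same residue of $n_P$ mod $q+1$, and since every two points lie in a common plane there is a single $\ell$ with $n_P \equiv \ell$ and $m_H \equiv x-\ell \pmod{q+1}$ for all $P$ and $H$. Evaluate $\sum_P n_P^2$ in two ways: combinatorially via $\sum_P n_P(n_P-1) = f_Y^T A_1 f_Y$ together with $f_Y \in V_0 + V_1$ and the eigenvalue $\theta_1 = q^2-1$ of $J_q(4,2)$, yielding $\sum_P n_P^2 = (q+1)x(q^2+q+1)(x+q)$; and via the decomposition $n_P = \ell + (q+1)\alpha_P$ combined with $\sum_P n_P = (q+1)x(q^2+q+1)$. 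Comparing, using $q^2+q+1 \equiv 1$ and $q^2+1 \equiv 2 \pmod{q+1}$, produces $2\left[\binom{x}{2} + \ell(\ell-x)\right] \equiv 0 \pmod{q+1}$. For $q$ even this is exactly the claimed congruence; for $q$ odd the factor of two is removed by a short case analysis using that $\ell$ is only defined modulo $q+1$ and so can be shifted by $(q+1)/2$, combined with the fact that $x(x-1)$ is automatically even.

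Parts (1) and (2) rest on classifying the admissible \emph{patterns}: the possible $(q+1){\times}(q+1)$ array of $s_{P_i,H_j}$'s on the grid of points and planes incident with a fixed line $L$. The identity forces every $s_{P_i,H_j}$ into $\{0,1,\ldots,q+1\}$, while row and column sums are pinned down by $n_{P_i}$ and $m_{H_j}$. Metsch enumerates the admissible patterns and shows that for $3 \leq x \leq q$ the only consistent local configurations assemble into a global configuration that is forced to be a union of trivial examples, contradicting non-triviality and yielding (1). Part (2) is the cubic refinement: one bounds higher moments such as $\sum_P n_P^3$ (equivalently, counts of ordered triples of lines of $Y$ through a common point) via the admissible-pattern restrictions, producing an inequality that rearranges to $x > q\sqrt[3]{q/2} - \tfrac{2}{3}q$.

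The hard part is the delicate bookkeeping in part (2): both the enumeration of admissible patterns and the extraction of the sharpest cubic inequality require tight control of slack, and every lossy step in the Cauchy--Schwarz-type estimates erodes the exponent under the cube root. Part (1), once the pattern classification is in hand, is essentially a rigid check, and part (3) is a clean algebraic reduction modulo the parity subtlety that surfaces only for $q$ odd.
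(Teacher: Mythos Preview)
The paper does not prove this theorem; it is a survey that attributes parts (1)--(3) to Metsch \cite{Metsch2010}, Metsch \cite{Metsch2014}, and Gavrilyuk--Metsch \cite{GMe2014}, after introducing the weighted design $g_{P,H}$ and the local pattern matrix $(t_{ij})$ attached to a fixed line $L$. Your sketch uses the same setup and correctly points to Metsch's pattern analysis as the engine behind (1) and (2), so at that level of detail there is nothing to object to.

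For part (3) your route diverges from Gavrilyuk--Metsch and has a real gap. Your global second-moment computation of $\sum_P n_P^2$ is correct and yields $2\bigl[\binom{x}{2}+\ell(\ell-x)\bigr]\equiv 0 \pmod{q+1}$. The problem is removing the factor $2$ for $q$ odd. Writing $A(\ell)=\binom{x}{2}+\ell(\ell-x)$ and $r=(q+1)/2$, the shift $\ell\mapsto\ell+r$ changes $A$ by $r(r-x)\pmod{q+1}$, and this is $\equiv 0$ exactly when $r$ and $x$ have the same parity; in that case the shift leaves $A\equiv r$ fixed and you gain nothing, nor does the evenness of $x(x-1)$ help. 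For instance with $q=7$, $x=8$, $\ell=0$ one has $2A(0)\equiv 0\pmod 8$ yet $A(0)\equiv A(4)\equiv 4$, so your proposed shift fails (even though $\ell'=2$ happens to work, nothing in your argument locates it). Gavrilyuk and Metsch avoid this entirely by arguing \emph{locally} on the pattern matrix of a single fixed line~$L$, as the paper's discussion preceding the theorem already indicates; their double-count produces the congruence directly, without the spurious factor of~$2$.
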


In the general case, the first bounds on $x$ were found by Rodgers, Storme, and Vansweevelt \cite{RSV2018} for $n=2m$
and by Blokhuis, De Boeck, and D'haeseleer \cite{BDBD2019} for general $n$.
The currently best known conditions on the sizes, which the author
is aware of, are summarized in the following theorem:

\begin{theorem}
%  Let $n \geq 2m$.
 Suppose that $Y$ is a family of $m$-spaces of $V(n, q)$
 of size $x\gauss{n-1}{m-1}$ with $\deg(f_Y) \leq 1$.
 If $Y$ is non-trivial, then the following holds:
 \begin{enumerate}
  \item If $(n, m) = (6, 3)$, then $x \geq \frac13 q$.
  \item If $n = 2m$ and $m \geq 3$, then $x > \sqrt[3]{q/2}$.
  \item If $n = 2m$, $m \geq 3$, and $q \geq q_0$ for some universal constant $q_0$, then $x \geq \frac15 q$.
  \item If $n \geq 3m-1$ and $m \geq 2$, then $x > \frac18 q^{n-3m+2}$.
  \item If $n \geq 3m+1$ and $m \geq 2$, then
  \[
    x \geq (\frac{1}{\sqrt[8]{2}} + o(1)) q^{n - \frac52 m + \frac12}.
  \]
 \end{enumerate}
\end{theorem}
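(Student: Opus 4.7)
The plan is to prove all five items using pattern analysis at a single $m$-space, combined with the weighted design $g_{P,H}$ introduced before the theorem. Fix a non-trivial $Y$ with $|Y| = x\gauss{n-1}{m-1}$. For each $m$-space $L$, record the \emph{pattern} $t_{ij}(L)$, the number of $M \in Y \setminus \{L\}$ containing the $(m-1)$-subspace $P_i$ of $L$ and contained in the $(m+1)$-space $H_j$ through $L$. The equitable bipartition property of $\{Y, X \setminus Y\}$ forces both $\sum_j t_{ij}$ and $\sum_i t_{ij}$ to depend only on whether $L \in Y$, and design-orthogonality against $g_{P_i,H_j}$ (together with $f_Y \in V_0 + V_1$) yields the bilinear identity for $f_Y^T g_{P_i,H_j}$ recalled just before the statement. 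These constraints, together with integrality of the $t_{ij}$, drive everything.

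For part (1), specialising to $J_q(6,3)$, one computes $\sum_{i,j} t_{ij}(L)^2$ in two ways: once via Equation \eqref{eq:projection} applied to a suitable relation of the Grassmann scheme, and once directly from the pattern. The resulting quadratic inequality in $x$ yields $x \geq q/3$. Part (2) generalises this to $J_q(2m,m)$ with $m \geq 3$ by iterating the pattern analysis one additional step; a Cauchy--Schwarz estimate applied to the third symmetric function of the entries $t_{ij}$ produces the cubic bound $x^3 > q/2$. Part (3) replaces that Cauchy--Schwarz step by a sharper algebraic input which becomes available only after $q \geq q_0$ has ruled out exceptional low-weight patterns, and improves the bound to the linear $x \geq q/5$.

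Parts (4) and (5) are of a different flavour, since the exponent of $q$ depends on $n - 3m$ rather than on $q$. The natural strategy is a dimension-reduction: for a suitable low-dimensional subspace $U$, consider the residual $Y_U = \{ M \in Y : U \subseteq M \}$, which (after trivial trimming) is a Boolean degree $1$ set in $J_q(n - \dim U,\, m - \dim U)$. If $x$ is too small, then $Y_U$ either collapses to a trivial example in the smaller Grassmannian or violates a lower bound from the earlier parts. Averaging over a family of such $U$, with weights chosen to match the eigenvalue decomposition $f_Y \in V_0 + V_1$, lifts these lower bounds to the claimed power of $q$. The explicit constants $1/8$ and $1/\sqrt[8]{2}$ come from optimising the weighting in this averaging step.

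The main obstacle lies in (4) and (5): one must preserve simultaneously the explicit constant and the correct exponent of $q$ through the dimension reduction, and verify that a positive proportion of the residuals $Y_U$ really do remain non-trivial rather than being absorbed into the bipartite structure of $Y$. This is exactly the step in which the successive improvements of \cite{RSV2018} and \cite{BDBD2019} become visible, and any further sharpening of the constants or the exponent would have to target the same step.
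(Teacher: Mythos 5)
There is a genuine gap: your proposal never actually establishes any of the five inequalities. In the survey, this theorem is a compilation of results from five separate papers, and its proof consists of attributions: part (1) is Theorem 1.3 of \cite{Metsch2017}, part (2) is Theorem 6.7 of \cite{RSV2018}, part (3) is Theorem 1.4 of \cite{Metsch2017} improved by Theorem 1.8 of \cite{Ihringer2019}, part (4) is Theorem 5.1 of \cite{Ihringer2024}, and part (5) is Theorem 3.6 of \cite{DBMS2024}. Each of those is a substantial argument in its own right, and your sketch replaces all of them with unverified narrative. For part (1) you assert that computing $\sum_{i,j} t_{ij}(L)^2$ in two ways yields ``a quadratic inequality in $x$'' giving $x \geq q/3$, but you exhibit neither the two counts nor the inequality; note also that the pattern machinery of \cite{GMe2014} recalled before the theorem is set up for $J_q(4,2)$ (points $P_i$ and hyperplanes $H_j$ through a line), and transplanting it to $(m-1)$-spaces and $(m+1)$-spaces through an $m$-space in $J_q(6,3)$ requires re-deriving the weighted design and its design-orthogonality, which you do not do. For part (2) the claim that ``a Cauchy--Schwarz estimate applied to the third symmetric function of the entries $t_{ij}$ produces the cubic bound $x^3 > q/2$'' is not a derivation, and for part (3) the ``sharper algebraic input which becomes available only after $q \geq q_0$'' is never identified.

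Parts (4) and (5) are the most serious problem, as you partly acknowledge. The one correct ingredient is that the link $Y_U = \{M \in Y : U \subseteq M\}$ of a degree~$1$ set is again degree~$\leq 1$ in the quotient Grassmannian; but the crucial steps --- that smallness of $x$ forces $Y_U$ to be trivial or to violate an inductive bound, that a positive proportion of the residuals remain non-trivial, and that an averaging ``weighted to match $f_Y \in V_0 + V_1$'' converts this into the exponents $q^{n-3m+2}$ and $q^{n-\frac52 m+\frac12}$ with the constants $\frac18$ and $2^{-1/8}$ --- are exactly the content of \cite{Ihringer2024} and \cite{DBMS2024} and are asserted rather than proved. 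As written, the proposal is a plausible research program, not a proof; to repair it you would either need to carry out each of the five computations in full or, as the survey does, cite the papers where they are carried out.
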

\begin{proof}
 The first claim is Theorem 1.3 in \cite{Metsch2017}.
 The second claim is Theorem 6.7 in \cite{RSV2018}.
 The third claim is Theorem 1.4 in \cite{Metsch2017}
 (with the minor improvement from Theorem 1.8 in \cite{Ihringer2019}).
 The fourth claim is Theorem 5.1 in \cite{Ihringer2024}.
 The fifth claim is Theorem 3.6 in \cite{DBMS2024}.
\end{proof}

There are also general modular conditions which can be found
in \cite{DBM2022,DBMS2024}.
Note that these are less strong than
the results for $(n, m) = (4, 2)$.

A particular special case are {\it two-intersection sets}.
A two-intersection set is a family of $1$-spaces $\scrP$
such that any $m$-space contains either precisely $\alpha$
or precisely $\beta$ elements of $\scrP$.
There are many examples for such sets for $(n,m) = (3,2)$,
but already for $(n,m) = (4, 2)$ only trivial constructions
are known: $\scrP$ is the empty set ($\alpha=\beta=0$), all the $1$-spaces ($\alpha=\beta=q+1$),
a hyperplane ($\alpha = 1, \beta = q+1$), or the complement of a hyperplane ($\alpha=0, \beta=q$).
If there is a non-trivial example for $n \geq 4$ and $m=2$,
then $q$ must be an odd square. The first open case is $(n, m, q) = (4, 2, 9)$.
The work by Tallini Scafati from the 1970s discusses this in detail \cite{TS1972,TS1976}.\footnote{%
In particular, the problem of the existence of a two-intersecting with respect to lines in $\PG(3, 9)$
has been open for more than five decades. This deserves a footnote.}
There is a close connection to so-called {\it two-weight codes}, see Theorem 12.5 in \cite{CK1986}.

If $Y$ is the set of all $m$-spaces which contain precisely $\alpha$
elements of $\scrP$, then $\deg(f_Y) \leq 1$.
Here Theorem \ref{thm:class_asymp} implies that there are no
non-trivial examples if $|n-2m| \geq c(q)$. See \cite[\S4]{Ihringer2024}.

\subsection{Results Beyond Degree $1$}

The results beyond the degree $1$ case are sparse.

\subsubsection{The Khot-Minzer-Safra Theorem}

Let us start with the most prominent result,
the recent proof of the 2-to-2 Games Conjecture by Khot, Minzer, and Safra \cite{KMS2023}.
Loosely speaking, the Unique Games Conjecture states that it is hard
to approximate a certain type of NP-complete problem and is an important
conjecture in complexity theory. The 2-to-2 Games Conjecture is a
weaker version and its proof has been a great breakthrough in theoretical computer science.
We refer to Boaz Barak's blog post \cite{Barak2018} for an excellent summary.

Put $V = V(n, 2)$ and $X = \gauss{V}{m}$.
For $Y \subseteq X$, an $r$-space $R$ and an $s$-space $S$,
let $Y(R, S)$ denote the set of all elements $y$ of $Y$ with
$R \subseteq y \subseteq S$.
Informally, the main result by Khot, Minzer, and Safra is as follows:
If $Y$ has significant weight on low degree,
then there exist $R, S$ such that $Y(R, S)$ is a significant proportion
of $X(R, S)$.
Formally, define $\Phi(Y)$ by $\Phi(Y) = \frac{|E(Y,\overline{Y})|}{k \cdot |Y|}$,
where $k$ is the degree of $J_q(n, m)$.

\begin{theorem}[Khot-Minzer-Safra]
 For all $\alpha \in (0, 1)$ there exists a $\eps > 0$ and an integer $t \geq 0$
 such that for all $m \geq m_0(\alpha)$ and all $n \geq n_0(m, \alpha)$,
 the following holds. Suppose that $Y$ is a family of $m$-spaces in $V(n, 2)$
 with $\Phi(Y) \leq \alpha$.
 Then there exist subspace $R, S$ with $\dim(R) + \codim(S) \leq t$ and
 \[
  \frac{|Y(R, S)|}{|X(R, S)|} \geq \eps.
 \]
\end{theorem}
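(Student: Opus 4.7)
The plan is to reduce the statement to a ``structure versus pseudorandomness'' dichotomy for low-degree functions on $J_q(n,m)$ with $q=2$. First, I would translate the expansion hypothesis into a spectral one. The Grassmann graph is in a cometric association scheme, so by the expander-mixing lemma and Equation \eqref{eq:projection}, $\Phi(Y)\leq\alpha$ forces a nontrivial portion of the mass of $f_Y$ to sit on the first few eigenspaces $V_0+V_1+\cdots+V_d$, where $d=d(\alpha)$ is bounded independently of $n,m$. Concretely, one picks $d$ large enough that $P_{d+1,1}/k\leq\alpha/2$ (with $P_{j,1}$ as in \S 2.2 for $J_2(n,m)$) and then one obtains that the projection of $f_Y-|Y|/|X|\cdot\mathbf{j}$ onto $V_0+\cdots+V_d$ has squared $\|\cdot\|_2$-norm at least $\eta\,|Y|/|X|$ for some $\eta=\eta(\alpha)>0$. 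The conclusion about a dense ``subcube'' $Y(R,S)$ will then follow from a purely combinatorial theorem about low-degree functions.

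The core ingredient is the \emph{globalness-to-structure} theorem. For parameters $r,s,\delta$, call a function $f\colon X\to\RR$ \emph{$(r,s,\delta)$-global} if for every $r$-space $R$ and every $S$ of codimension at most $s$ in $V$ one has $\|f\restriction X(R,S)\|_2^2\leq \delta\,\|f\|_2^2$. The theorem to prove is: if $f$ has degree at most $d$ and is $(r,s,\delta)$-global for suitable $r,s,\delta$ depending on $d$ and $\eta$, then $\|f\|_2^2$ is vanishingly small. Applied contrapositively to the projection of $f_Y$ onto $V_0+\cdots+V_d$, this delivers $R,S$ with $\dim R+\codim S\leq t:=r+s$ on which this projection, and hence $f_Y$ itself, has density bounded below by $\eps=\eps(\alpha)$.

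To establish the globalness-to-structure theorem I would run an induction on $d$, using a level-$d$ inequality of hypercontractive type for the Grassmann scheme over $\FF_2$. The idea is that on each subcube $[R,S]$ one can decompose $f\restriction X(R,S)$ into a part carried by the smaller Grassmann scheme and a ``derivative'' part of strictly smaller degree. A careful averaging over choices of $R$ (an $r$-space) and of $S$ (of codimension $s$) couples the $L^2$-mass of $f$ to that of its restrictions; globalness makes these restrictions small, while the induction hypothesis controls the derivatives. The outcome is an inequality of the form $\|f\|_4^4\leq C^d\,\|f\|_2^{2+2/d}$ for global degree-$d$ functions, which, combined with the trivial $L^\infty$-bound coming from $f$ being (essentially) a projection of a $\{0,1\}$-valued function, forces $\|f\|_2^2$ to be tiny.

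The principal obstacle, and the reason this was a landmark result, is precisely this hypercontractive inequality. Unlike $\{0,1\}^n$, the Grassmann scheme has no tensor-product structure, the intervals $[R,S]$ are themselves Grassmann schemes rather than trivial objects, and the eigenspaces of $J_2(n,m)$ do not admit a Fourier basis indexed by independent coordinates. Consequently the Bonami--Beckner route is unavailable and one must replace it by a bootstrapping argument that shuttles between global $L^p$-bounds and local restriction bounds across many scales of $(r,s)$. Making this iteration close — in particular, keeping the loss $C^d$ independent of $n$ and $m$ — is the hard technical step; the remaining reductions, namely turning the expansion hypothesis into a low-degree mass bound and turning a non-negligible $L^2$-restriction into the stated density on $X(R,S)$, are comparatively routine applications of Delsarte theory and Cauchy--Schwarz.
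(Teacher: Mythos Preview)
The paper does not prove this theorem. It is a survey: the Khot--Minzer--Safra result is stated with a citation to \cite{KMS2023}, and the only argumentative content the paper adds is the short paragraph after the statement observing, via the eigenvalues $\theta_j$ of $J_2(n,m)$ and Equation~\eqref{eq:projection}, that the hypothesis $\Phi(Y)\leq\alpha$ is equivalent to $f_Y$ having a fixed positive fraction of its $\ell^2$-mass on $V_0+\cdots+V_{d(\alpha)}$. That is exactly your first reduction step, so on the one piece the paper does spell out you agree with it.

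Beyond that there is nothing in the paper to compare your sketch against. Your outline --- reduce to low-degree mass, then prove a ``global functions have small low-degree mass'' theorem via an iterated restriction/hypercontractivity argument, and conclude by contraposition that some zoom-in $[R,S]$ is dense --- is indeed the shape of the Khot--Minzer--Safra proof, but the paper neither reproduces nor summarizes that argument. So your proposal is not wrong, but it cannot be said to match or differ from ``the paper's own proof'' because there isn't one here; the paper simply quotes the result.
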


Recall that the distinct eigenvalues $\theta_j$ of $J_2(n, m)$ are:
\[
 \theta_j = P_{j1} = 2^{j+1}(2^{m-j}-1)(2^{n-m-j}-1) - (2^j-1) \approx 2^{n-j} \text{ for } j \ll n.
\]
Using these eigenvalues in Equation \eqref{eq:projection} we see that $\Phi(Y) \leq \alpha$ for some constant $\alpha$
is equivalent with $\left\| \sum_{j=0}^{d(\alpha)} E_j f_Y \right\|/\|f_Y\| \geq \beta(\alpha)$.
For instance, for $\alpha = \frac12$, we can take $d(\alpha) = 1$ and $\beta(\alpha) = 1 + o(1)$.
Hence, their result does indeed describe the structure of families $Y$ with a
significant weight on low degree.

% To illustrate the result, let us pretend that $f_Y$ has degree $1$, so $d(\alpha) = 1$ and $\beta(\alpha) = 0$.%
% \footnote{Of course we have no control over any of these parameters.
% Here we only want to illustrate the connection to the other results in this survey.}
% That is, $f_Y$ has degree $1$.
% Indeed, Theorem \ref{thm:class_smallq} shows that
% either $Y(P, V) = X(P, V)$ or $Y(\{ 0 \}, H) = X(\{ 0 \}, H)$ for
% some $1$-space $P$, respectively, for some hyperplane $H$.
% Hence, we can take $\eps = 1$ and $t=1$.
% Then $\dim(R) + \codim(S) \leq 1$ and $\beta(\alpha)$.
% Here either $R = P$ or $S = H$.

\subsubsection{Degree $2$}

The degree $2$ in $J_q(n, m)$ has been investigated
for small dimension $n$ in \cite{DBDIM2023}.
In contrast to Theorem \ref{thm:NS_slice}(2),
but similar to the behavior for degree $1$ in $J_q(4, 2)$,
$n-m,m \geq 2d$ does not seem to suffice for an FKN-type result.
The authors provide the
following example in Section 5.2 for $(n,m) = (8,4)$ which is reminiscent
of the Bruen-Drudge example for degree $1$.
The construction goes as follows.
Define a quadratic form $Q$ by
\[
 Q(x) = x_1^2 + \alpha x_1x_2 + \beta x_2^2 + x_3^2 + x_4^2 + x_5^2 + x_6^2 + x_7^2 + x_8^2
\]
such that $1 + \alpha x + \beta x^2$ is irreducible over $\GF(q)$.
There are six types of $4$-spaces with respect to $Q$.
Take for $Y$ a union of the three types with a radical of dimension
at least $2$. Then $\deg(f_Y) = 2$.

Many non-trivial degree $2$ examples for $(n,m) = (6,3)$
can be found in \cite{DBDIM2023}. As for $J(n, m)$,
a Nisan-Szegedy type theorem (in a narrow sense)
in $J_q(n, m)$ is impossible for $m < 2d$.
% , but Theorem \ref{thm:NS_slice}
% and the following discussion for $J(n, m)$ give the suspicion
% that the case $m < 2d$ is not interesting for $J_q(n, m)$.

\subsubsection{Divisibility Conditions}\label{sec:VSdiv}

Kiermaier, Mannaert and Wassermann recently investigated
divisibility conditions for the general case.
Let $Y$ be a family of $m$-spaces of $V(n, q)$
such that $\deg(f_Y) \leq d$. Then Theorem 4.7
in \cite{KMW2024} shows that
\begin{align}
 \gcd\left( \gauss{n}{m}, \gauss{n-1}{m-1}, \ldots, \gauss{n-d}{m-d} \right) \label{eq:div_grass}
\end{align}
divides $|Y|$. For $n$ even and $d=2$, this implies that
$|Y| = x\gauss{n-1}{m-1}$ for some integer $x$.
For instance, Lemma 4.11 in \cite{KMW2024} shows that $|Y| = x \gauss{n-d}{m-d}$ for
some integer $x$ if $m-i \,|\, n-i$ for all $i \in \{ 0, \ldots, d-1 \}$.
% For instance, $(n, m, d) = (16, 4, 2)$ satisfies this.

\section{Other Structures}

Let us mention some further results in some other structures.

\subsection{Permutation Groups}

Affine Boolean functions in the symmetric group $Sym(n)$ have been
classified in \cite{EFP2011}. FKN theorems for $Sym(n)$
can be found in \cite{EFF2015,EFF2015a,Filmus2021}. While
a Nisan-Szegedy type theorem does not hold in the sense
that there is a junta, it has been shown in \cite{DFLLV2020}
that a {\it constant depth decision tree} suffices to decide the value of
Boolean degree $d$ function on $Sym(n)$. An approximate version
of this result holds too. The different {\it complexity measures}
discussed in \cite{DFLLV2020} also give inspiration for how
classification results might need to be phrased.

The more general setting of Cameron-Liebler sets (or
Boolean degree $1$ functions) of permutation groups has been recently investigated in \cite{DMP2024}.

\subsection{Bilinear Forms, Affine Spaces, Polar Spaces} \label{sec:bilin}

Bilinear forms \cite[\S9.5]{BCN}, affine spaces,
and polar spaces \cite[\S9.4]{BCN} all share the property that
they are closely related to Grassmannians.

\subsubsection{Bilinear Forms}

The bilinear forms graph $\Gamma$ can be seen as an induced subgraph
of the Grassmann graph $J_q(n, m)$: Fix a $(n-m)$-space $S$.
The induced subgraph of $J_q(n, m)$ on all $m$-spaces disjoint
from $S$ is the bilinear forms graph, see \cite[\S9.5]{BCN}.
Barak, Kothari, and Steuer give a proof of the $2$-to-$2$
games conjecture using $\Gamma$ with $q=2$
instead of $J_2(n, m)$ \cite{BKS2019}.
In \cite{FI2019} it is shown that the restriction of the
Bruen-Drudge example (see \ref{sec:bd_fam}) to $\Gamma$ is also non-trivial (in some sense).
In \cite{BCGG2018} equitable partitions for $(q, m) = (2, 2)$ are studied.
Bounds on the possible sizes of degree $1$ examples are
given in \cite{Guo2021,Ihringer2021}.
One central important property of the hypercube and the Johnson graph,
which helps with the investigation of low degree Boolean functions,
is \textit{hypercontractivity}. Unlike for the Grassmann scheme,
hypercontractivity results exists for the bilinear forms scheme, see \cite{EKL2022,EKL2024}.

\subsubsection{Affine Spaces}

% For affine spaces we remove all $m$-spaces in a fixed hyperplane of $V(n, q)$
% from $J_q(n, m)$.
% Then we move from the projective space $\PG(n-1, q)$
% to the affine space $\AG(n-1, q)$.
The affine case of Cameron-Liebler sets/Boolean degree $1$ functions
has been investigated in \cite{DBM2022,DBMS2022,DIMS2021,DMSS2020} and an asymptotic classification
is implied by \cite{Ihringer2024}. Classification results in the affine setting are strictly
easier as any Boolean degree $d$ function on the affine $V(n-1, q)$ is
a Boolean degree $d$ function on $V(n, q)$.
The modular condition in Theorem \ref{thm:xlines} generalizes to affine spaces
for $m=2$ \cite{DBM2022}. Here it reads
\[
 x(x-1) \frac{q^{n-3}-1}{q-1} \equiv 0 \pmod{2(q+1)},
\]
where $x$ only depends on the size of $Y$.
The affine case also has applications in property testing. For instance,
Theorem 2.4 in \cite{KM2022} is a weak type of FKN theorem
which is used for testing Reed-Muller codes.
A similar application for degree $d$ functions can be found in Theorem 1.1 in \cite{MZ2023}.

Note that there are two natural ways of defining degree in the affine lattice
by considering the dual.
% Either one starts with $1$-spaces or with hyperplanes for degree $1$.
% In all the cited references the former is meant.
We are not aware of any investigations of low degree Boolean functions for the dual.

\subsubsection{Polar Spaces}

Let $\sigma$ be a non-degenerate, reflexive sesquilinear form on $V(n, q)$.
A \textit{classical polar space}
consists of the subspaces of $V(n, q)$ which vanish
on $\sigma$, that is, the isotropic subspaces with respect to $\sigma$.
For $m \geq 2$, this gives us all the classification
problems as for bilinear forms and affine spaces on an induced subgraph of $J_q(n, m)$.
See \cite{DBD2020} for an investigation for maximal isotropic subspaces
and \cite{FI2019} for $2 \leq m \ll n$.
Note that at least in the case of maximal isotropic subspaces
the term {\it Cameron-Liebler class} does not correspond to
a degree $1$ function in this context: the former is defined
using design-orthogonality with so-called {\it spreads}, while
degree $1$ is defined as in this survey.
% Note that $V(n, q)$ corresponds to spherical buildings of type $A_n$,
% while polar spaces correspond to spherical buildings of type $B_n/C_n$.

As for affine spaces, there are two choices for the degree:
Either $1$-spaces or maximal isotropic subspaces have rank $1$.
The above discusses the former.
For the latter and $m=1$, the corresponding families are called \textit{tight sets}.
FKN-type and Nisan-Szegedy-type theorems are surely a very challenging problem
due to the amount of examples known.
See \cite{BKLP2007} as starting point.
An up-to date survey can be found in Chapter 2 in \cite{BvM}.

\paragraph*{Acknowledgements} The author thanks Yuval Filmus,
Alexander Gavrilyuk, Jonathan Mannaert, Dor Minzer, Francesco Pavese,
Morgan Rodgers, and Yuriy Tarannikov for discussing topics related to this
survey. We also thank the anonymous reviewer for carefully reading this manuscript.

{\small

\bibliographystyle{plain}

}

\end{document}